\setlist[itemize]{leftmargin=*}
\newcommand{\eps}{\varepsilon}
\DeclareMathOperator{\trace}{trace}
\DeclareMathOperator*{\maximize}{maximize}
\DeclareMathOperator*{\subjectto}{subject\ to}
\DeclareMathAlphabet\mathbfcal{OMS}{cmsy}{b}{n}
\newtheorem{theorem}{Theorem}
\newtheorem{mydef}{Definition}
\newtheorem{mylem}{Lemma}
\newtheorem{myrem}{Remark}
\newtheorem{asmp}{Assumption}
\newtheorem{mycor}{{Corollary}}
\newcommand{\bmat}[1]{\begin{bmatrix} #1 \end{bmatrix}}
\newcommand{\m}{\boldsymbol}
\newcommand{\mbb}[1]{\mathbb{#1}}
\newcommand{\mr}[1]{\mathrm{#1}}
\DeclarePairedDelimiter\abs{\lvert}{\rvert}%
\DeclarePairedDelimiter\norm{\lVert}{\rVert}%
\let\oldabs\abs
\def\abs{\@ifstar{\oldabs}{\oldabs*}}
\let\oldnorm\norm
\def\norm{\@ifstar{\oldnorm}{\oldnorm*}}
\NewDocumentCommand{\evaluat}{sO{\big}mm}{%
	\IfBooleanTF{#1}
	{\mleft. #3 \mright|_{#4}}
	{#3#2|_{#4}}%
}
\tikzset{
	tangent/.style={decoration={
			markings,mark=at position #1 with {
				\coordinate (ta) at (0,0);
				\coordinate (tb) at (0.1,0);
			}
		},postaction=decorate},
	tangent/.default=0.5
}
\newcommand{\parstart}[1]{\noindent \textbf{#1.}\;}
\newcommand{\Rn}[1]{\mathbb{R}^{#1}}
\newcommand{\titlebf}[1]{\title{\Large \vspace{0.7cm} \LARGE \centering {\textsc{\textbf{#1}}}}}
\author{Mohamad H. Kazma and Ahmad F. Tah$\text{a}^{\diamond}$ %\vspace{-0.9cm} 
%\vspace{-0.9cm} 
\thanks{
	$^\diamond$Corresponding author. This work is supported by National Science Foundation under Grants 2152450 and 2151571. The authors are with the Civil $\&$ Environmental Engineering and Electrical $\&$ Computer Engineering Departments, Vanderbilt University, 2201 West End Ave, Nashville, Tennessee 37235. Emails: mohamad.h.kazma@vanderbilt.edu, ahmad.taha@vanderbilt.edu.}
%	\vspace{-0.6cm}
}
\begin{document}
%\setlength{\abovedisplayskip}{3.1pt}
%\setlength{\belowdisplayskip}{3.1pt}
%\setlength{\abovedisplayshortskip}{3.1pt}
%\setlength{\belowdisplayshortskip}{3.1pt}
%	
%\newdimen\origiwspc%
%\newdimen\origiwstr%
%\origiwspc=\fontdimen2\font% original inter word space
%\origiwstr=\fontdimen3\font% original inter word stretch
%\fontdimen2\font=0.61ex% inter word space %61ex

\fontdimen2\font=0.61ex% inter word space
\fontdimen3\font=0.62ex% inter word stretch

\maketitle
\thispagestyle{headings}

\pagestyle{headings}

\markboth{To Appear in the 2025 American Control Conference (ACC'2025), Denver, Colorado, July 2025}{}

%%%%%%%%%%%%%%%%%%%%%%%%%%%%%%%%%%%%%%%%%%%
%New command for deleting text and highlighting it
\newcommand\reduline{%
	\bgroup\markoverwith
	{\textcolor{red}{\pgfsetfillopacity{0.2}\rule[-0.5ex]{2pt}{10pt}\pgfsetfillopacity{1}}%
		\textcolor{red}{\llap{\rule[0.4ex]{2pt}{0.4pt}}\llap{\rule[0.7ex]{2pt}{0.4pt}}}%
	}%
	\ULon}
%%%%%%%%%%%%%%%%%%%%%%%%%%%%%%%%%%%%%%%%

\begin{abstract}
Optimal sensing nodes selection (SNS) in dynamic systems is a combinatorial optimization problem that has been thoroughly studied in the recent literature. This problem can be formulated within the context of set optimization. For high-dimensional nonlinear systems, the problem is extremely difficult to solve. It scales poorly too. Current literature poses combinatorial submodular set optimization problems via maximizing observability performance metrics subject to matroid constraints. Such an approach is typically solved using greedy algorithms that require lower computational effort yet often yield sub-optimal solutions.  In this paper, we address the SNS problem for nonlinear dynamical networks using a variational form of the system dynamics, that basically perturb the system physics. As a result, we show that the observability performance metrics under such system representation are indeed submodular. The optimal problem is then solved using the \textit{multilinear continuous extension}. This extension offers a computationally scalable and approximate continuous relaxation with a performance guarantee. The effectiveness of the extended submodular program is studied and compared to greedy algorithms. We demonstrate the proposed set optimization formulation for SNS on nonlinear natural gas combustion networks.
\end{abstract}
\vspace{-0.1cm}
\begin{IEEEkeywords}
	Sensing node selection, nonlinear dynamical networks, observability, submodularity, multilinear extension\vspace{-0.2cm}
\end{IEEEkeywords}
\vspace{-0.1cm}
\section{Introduction and Paper Contributions}\vspace{-0.1cm}\label{sec:introduction}

\noindent \lettrine[lines=2]{T}{he} sensing node selection (SNS) problem can be formulated within the context of \textit{set optimization}, where the decision variables are discrete sets and not vectors or matrices. One key property that is relevant to combinatorial set optimization is that of submodularity; it is a diminishing returns property which provides provable performance guarantees that allow to solve the combinatorial SNS problem via simple greedy algorithms. 
%~\cite{Guo2021}. 
This requires posing the SNS problem as a \textit{submodular} set optimization problem with objective function ${f}$ in the form
\vspace{-0.1cm}
\begin{equation}\vspace{-0.1cm}\label{eq:sns_submod}
 f^*:=\maximize_{\mathcal{S}\subseteq\mathcal{V},\; \mathcal{S} \in \mathcal{I}} \;\; f(\mathcal{S}),
% f^*_{\mathcal{S}}
\end{equation}
where the set $\mathcal{V}$ represent a set of available sensor nodes to choose from and the set $\mathcal{I}$ represents \textit{matroid} constraints. A typical matroid constraint for SNS is the cardinality constraint $\mathcal{I}_c=\{ \mathcal{S} \subseteq \mathcal{V}: \abs{\mathcal{S}} = r\}$ for some $r$ that represents the feasible number of sensing nodes. Such an approach is typically solved for $f^*_{\mathcal{S}}$ using greedy algorithms that require lower computational effort while achieving a $(1-1/e)$ performance guarantee. Given that $(1-1/e)\approx 0.63$, $f^*_{\mathcal{S}}$ is at least $0.63$ the optimal value $f^*$. 

In this paper, we address solving the SNS problem for a nonlinear dynamical system via quantifying submodular observability-based metrics while posing it as a submodular maximization problem~\eqref{eq:sns_submod}. For linear systems, Gramian-based observability quantification allows for scalable SNS by exploiting submodular observability notions as demonstrated in~\cite{Summers2016}. As compared to nonlinear systems, observability-based SNS for nonlinear systems remains a topic of ongoing research~\cite{Haber2018}. Typically, quantifying nonlinear system observability can be approached by considering an empirical Gramian approach~\cite{Lall1999} or a Lie derivative approach~\cite{Krener1983}. Both methods can become infeasible for large scale systems when considering solving the SNS problem. A \textit{variational} approach can be considered to handle system nonlinearities. Such system is considered linear-time varying along the system trajectory and thus an observability Gramian can be computed more efficiently~\cite{Kazma2024}. 

Furthermore, the application of the aforementioned greedy algorithm is well-studied throughout the literature for the cardinality constrained set problem~\eqref{eq:sns_submod}. Recently, there has been growing interest in extending such algorithms to handle other matroid constraints. In particular, the multilinear relaxation offers a powerful continuous extension to handle various matroids and non-monotone set functions~\cite{Calinescu2011}. The application of this extension for submodular set maximization is introduced in~\cite{Vondrak2008a}; its applicability has gained interest within the literature~\cite{Calinescu2011,Chekuri2014,Chekuri2019}. The relaxed submodular problem can be solved using a \textit{continuous greedy }algorithm along with a \textit{pipage rounding} algorithm while achieving a $(1-1/e)$ performance guarantee. Several algorithms have been developed for this continuous view point, this includes parallelization algorithms that can extensively reduce the computational effort of the maximization problem~\cite{Chekuri2019}. Accordingly, in this paper we apply the multilinear extension to solve the optimal SNS problem. To the best the authors' knowledge, the application of such an extension has not been applied within the context of observability-based SNS in linear and nonlinear systems.

\parstart{Paper Contributions} The main contributions of this paper are as follows. $(i)$ We show that the observability Gramian which is based on the variational form of a nonlinear dynamical system is a modular set function under the parameterized sensor selection formulation. $(ii)$ We show that SNS problem that is based on observability metrics under the action of the variational Gramian are submodular and modular. In particular, we show that the $\trace$ is modular, and both the $\mr{rank}$ and $\mr{log}\,\mr{det}$ are monotone submodular. This is analogous to the case for a linear observability Gramian as demonstrated in~\cite{Summers2016}. $(iii)$ We introduce and demonstrate a multilinear continuous relaxation to the SNS maximization problem~\eqref{eq:sns_submod}. A continuous greedy algorithm along with a pipage rounding algorithm are used to solve the optimal problem under the cardinality constraint while achieving a worst case bound. The proposed method is illustrated on a combustion reaction network. Note that we consider a linear measurement model for brevity of exposition, while the method also applies to nonlinear measurement models.

\parstart{Broader Impacts} The multilinear continuous extension enables posing~\eqref{eq:sns_submod} under different constraints while solving using efficient algorithms. There are many other types of constraints besides the cardinality constraint. For example, Knapsack constraints can be considered in the context of SNS. That is, we can assign budget constraints to the cardinality set maximization problem as $\max _S f(S)$ s.t. $\sum_{s \in \mathcal{V}} c(s) \leq B$ where $B$ is a non-negative budget. The applicability of the multilinear extension for such problem given parameter $\eps>0$ results in $(1-1 / e-\epsilon)$ performance bounds; see\cite{Chekuri2019}.
%%Removed to make 6 pages for L-CSS, used in arxiv
Accordingly, such an extension enables guaranteed performance under various matroid constraints that can arise when considering SNS applications for nonlinear systems.

\parstart{Notation} Let $\mathbb{N}$, $\Rn{n}$, and $\Rn{n\times m}$ denote the set of natural numbers, real-valued row vectors with size of $n$, and $n$-by-$m$ real matrices respectively. The cardinality of a set $\mathcal{N}$ is denoted by $|\mathcal{N}|$. The operators $\mr{log}\,\mr{det}(\m{A})$ and $\mr{trace}(\m{A})$ return the logarithmic determinant and trace of matrix $\m{A}$. The operator $\{\m{x}_{i}\}_{i=1}^{\mr{N}}  \in \Rn{\mr{N}n}$ constructs a column vector that concatenates vectors $\m{x}_i \in \Rn{n}$ for all $i \in \{1,2,\ldots, \mr{N}\}$.

\parstart{Paper Organization} The paper is organized as follows:~Section~\ref{sec:prelims} provides preliminaries on nonlinear observability. Section~\ref{sec:submodularity} introduces submodular maximization and formulates the multilinear extension for the SNS problem. Section~\ref{sec:ObsMetrics} provides evidence regarding the submodularity of the observability measures. The numerical results are presented in Section~\ref{sec:casestudies}.
\vspace{-0.4cm}
\section{Preliminaries on Nonlinear Observability}\vspace{-0.1cm}\label{sec:prelims}
In this section, we present the dynamical system setup and present its variational representation. Subsequently, we introduce the concept of variational observability Gramian.
% which is shown to be equivalent to the linear observability Gramian.
\vspace{-0.3cm}
\subsection{Nonlinear Dynamical System Setup}\vspace{-0.1cm}\label{sec:setup}

In this paper, we consider the following discrete-time nonlinear dynamical system with a parameterized measurement equation that represents the action of the system dynamics evolving on the smooth manifold $\mathcal{M}$.
%%Before R1-L-CSS - can be used if we introduce the cont-time system first (C2-R4)
%Discrete-time dynamics are considered more general than that in continuous-time, %~\cite{Brunton2022b},
%such perspective is more natural when considering measurement from sensors. The continuous-time nonlinear dynamics~\eqref{eq:model_CT} can be equivalently represented in discrete-time as follows
%%%%%%%%%%%%%
%\begin{subequations}~\label{eq:model_DT}
%	\begin{align}
%		\m{x}_{k+1} &= \m x_{k} +\tilde{\m f}(\m x_k),~\label{eq:RKutta_dynamics_compact}\\
%		\m y_k &= \tilde{\m{C}} \m{x}_{k},~\label{eq:DT_measurement_model}
%	\end{align}
%\end{subequations}
\vspace{-0.1cm}
\begin{equation}\vspace{-0.1cm}\label{eq:model_DT}
		\m{x}_{k+1} = \m x_{k} +\tilde{\m f}(\m x_k), \;\;\;\;\; \quad
		\m y_k = \m{h}(\m{x}_k)= \tilde{\m{C}} \m{x}_{k},
\end{equation}
where the discretization period is denoted as $T > 0$ and $k\in\mbb{N}$ refers to the discrete-time index, such that vector $\m x_k = \m x(kT)  \in \mbb{R}^{n_x}$ denotes the system states evolving in $\mathcal{M}$. The vector $\m{y}_k \in \mbb{R}^{n_{y}}$ denotes the global output measurements. The nonlinear mapping function $\tilde{\m{f}}(\cdot):\mathcal{M} \rightarrow\mbb{R}^{n_x}$ and nonlinear mapping measurement function $\m{h}(\cdot) :\mathcal{M} \rightarrow\mbb{R}^{n_y}$ are smooth and at least twice continuously differentiable. The parameterized measurement matrix $\tilde{\m{C}} \in \Rn{n_y \times n_x}$ represents the mapping of output states under a configuration of the sensors.
%Note that, we consider a linear measurement model for scope of this letter.
%Removed for reducing to 6 pages kept in arXiv
%Note that, function $\tilde{\m{f}}(\cdot)$ depends on the choice of discretization method.

There are a plethora of discretization methods that can be utilized to discretize nonlinear systems and the choice of discretization method relies on the desired accuracy of the discretization. In this paper, the \textit{implicit Runge-Kutta} (IRK) method 
%~\cite{Iserles2009} 
is the chosen discretization method. This method allows for a wide-range of application to dynamical systems that have various degrees of stiffness. Note that studying different discretization methods is outside the scope of this paper. With that in mind, for the IRK method the nonlinear function $\tilde{\m{f}}(\cdot)$ is defined as $\tilde{\m{f}}(\cdot):= \tfrac{T}{4}\left(\m f(\m \zeta_{1,k+1})+3\m f(\m \zeta_{2,k+1})\right),$
%
%\begin{equation}~\label{eq:ftilde}
%	 \tilde{\m{f}}(\cdot):= \tfrac{T}{4}\left(\m f(\m \zeta_{1,k+1})+3\m f(\m \zeta_{2,k+1})\right),
%\end{equation}
where auxiliary state vectors $\m \zeta_{1,k+1},\m \zeta_{2,k+1}\in\mbb{R}^{n_x}$ are auxiliary for computing $\m x_{k+1}$ provided that $\m x_{k}$ is given. For brevity, we omit the full description of auxiliary vectors $\m \zeta_{1,k+1}$ and $\m \zeta_{2,k+1}$; interested readers are referred to~\cite[Section II]{Kazma2024}.
The following assumption asserts that $\m{x}_k$ belongs within a set $\mathcal{X}$.
\vspace{-0.2cm}
\begin{asmp}\label{assumption:compact}
	Let the compact set $\mathcal{\m{X}}$ contain the set of all feasible solutions of system~\eqref{eq:model_DT}, then the system trajectory remains in $\mathcal{\m{X}} \subseteq \mathcal{M}$ for any initial state $\m{x}_{0} \in \mathcal{\m{X}}_{0}$ and $k \geq 0$.\vspace{-0.2cm}
\end{asmp}
\vspace{-0.2cm}
%%%Before L-CSS%%%%%%%%%%%%%%%%%%%%%%%%%%%%%%%%%%
%\begin{asmp}~\label{assumption:compact}
%	For a compact set $\mathcal{\m{X}}$ that contains the set of all feasible solutions of system~\eqref{eq:model_CT}, the system trajectory remains in $\mathcal{\m{X}} \subseteq \mathcal{M}$ for any $k \geq 0$ and given any initial state $\m{x}_{0} \in \mathcal{\m{X}}_{0}$.
%\end{asmp}
%%%%%%%%%%%%%%%%%%%%%%%%%%%%%%%%%%%%%%%%%%%%%
\vspace{-0.3cm}
\subsection{Nonlinear Observability: A Variational Approach}\vspace{-0.1cm}\label{sec:VarGram}
There are a myriad of different approaches for observability-based SNS. 
%removed to reduced pages to 6
%This is due to the fact that the quantification of nonlinear observability lacks universal construction methods. 
One approach to evaluate a dynamical system's observability is to compute the empirical observability Gramian of the system~\cite{Qi2015,Kunwoo2023}. 
%~\cite{Qi2015, Brace2022, Kunwoo2023}.
However, scaling the internal states and measurements while capturing the local variations of states from the Gramians eigenvalues is not clearly defined~\cite{Krener2009}. Lie derivative on the other hand are typically not considered for assessing nonlinear observability under the context of sensor selection for the following reasons. $(i)$ Computing Lie derivatives is expensive since it requires the calculation of higher-order derivatives~\cite{Whalen2015a}; $(ii)$ it results in a qualitative observability rank condition that is not suitable in the context of SNS~\cite{Krener1983, Krener2009}.  
Considering a nonlinear discrete-time system, a moving horizon observability approach is introduced in~\cite{Haber2018} and further developed in our previous work~\cite{Kazma2023f}; it offers a more tractable and robust solution than the empirical Gramian. However, the developed Gramian has no clear relation to the linear observability Gramian. Recently, observability Gramians based on a variational system representation of the nonlinear dynamics have been developed; see~\cite{Kawano2021,Kazma2024}. As claimed by~\cite{Kazma2024}, the Gramian extends the linear Gramian to nonlinear systems. 

Consider two nearby trajectories $\m{x}_{k}$ and $\m{x}_{k} + \m\delta{\m{x}_k}$ resulting from initial states $\m{x}_{0}$ and $\m{x}_{0} + \m\delta{\m{x}_0}$. Note that $\m \delta{\m{x}_0}\in \Rn{n_x}$ represents the infinitesimal perturbation $\eps>0$ to initial states $\m{x}_0$. Its exponential decay or growth for $k \in \{0\;, 1\; ,\;\cdots\;,\mr{N}-1\}$ is denoted by $\m \delta\m{x}_k\in \Rn{n_x}$. The discrete-time interval for the simulation is defined as $\mr{N}$. To that end, the variational form of the discrete-time system~\eqref{eq:model_DT} is as follows
% \begin{equation}~\label{eq:DiscVarState}
	% 		\m{\delta}\m{x}_{k} =
	% 		\m{\Phi}_{0}^{k}(\m{x}_0) \m{\delta}\m{x}_0 = \left(\m{I}_{n}  +\tfrac{\partial\tilde{\m{f}}( \m{x}_{k},\m{x}_{k-1})}{\partial\m{x}_{k}}\right)\tfrac{\partial\m{x}_{k}}{\partial\m{x}_{0}}\m{\delta}\m{x}_0,
	% \end{equation}
%%%%Reduced for brevity for L-CSS%%%%%%%%%%%%%%%%%%%%
%\begin{subequations}~\label{eq:DiscVarState}
%	\begin{align}
%	\m{\delta}\m{x}_{k} &=
%	\m{\Phi}_{0}^{k}(\m{x}_0) \m{\delta}\m{x}_0,~\label{eq:DiscVar}\\
%	\m{\delta}\m{y}_{k} &=\m{\Psi}_{0}^{k}(\m{x}_{0})\m{\delta}\m{x}_{0},~\label{eq:var_measure}
%	\end{align}
%\end{subequations}
%%%%%%%%%%%%%%%%%%%%%%%%%%%%%%%%%%%%%%
\vspace{-0.1cm}
\begin{equation}\label{eq:DiscVarState}
		\m{\delta}\m{x}_{k+1} =
		\m{\Phi}_{0}^{k}(\m{x}_0) \m{\delta}\m{x}_0, \quad \quad 
		\m{\delta}\m{y}_{k} =\m{\Psi}_{0}^{k}(\m{x}_{0})\m{\delta}\m{x}_{0},
\end{equation}
where $\m{\Phi}_{0}^{k}(\m{x}_0):=\left(\m{I}_{n_x}  +\tfrac{\partial\tilde{\m{f}}( \m{x}_{k})}{\partial\m{x}_{k}}\right)\tfrac{\partial\m{x}_{k}}{\partial\m{x}_{0}} \in \Rn{n_x \times n_x}$ defines the variational mapping function, such that $\m{\Phi}_{0}^{0}(\m{x}_0)=\m{I}_{n_x}$. The identity matrix is denoted as $\m{I}_{n_x} \in \Rn{n_x\times n_x}$. Matrix $\m{\Psi}_{0}^{k}(\m{x}_{0}) :=  \tilde{\m{C}} \m{\Phi}_{0}^{k}(\m{x}_0)\in \mathbb{R}^{n_y\times n_x}$ denotes the parameterized variational measurement mapping function. Vector $\m{\delta}\m{y}_k  \in \mathbb{R}^{n_y}$ represents the is the measurement along the variational trajectory. Readers are referred to~\cite{Kawano2021,Kazma2024} for the derivation of the variational system~\eqref{eq:DiscVarState}. Moving forward, we remove the dependency of $\m{\Phi}_{0}^{k}(\m{x}_0)=\m{\Phi}_{0}^{k}$ and $\m{\Psi}_{0}^{k}(\m{x}_0)=\m{\Psi}_{0}^{k}$ on $\m{x}_0$. 
\begin{myrem}~\label{rmk:Phi}
	Notice that $\m{\Phi}_{0}^{k}$ represents the derivative of~\eqref{eq:model_DT} with respect to $\m{x}_0$ for $k \in \{0\;,1\;,\;\cdots\;,\mr{N}-1\}$. Thus, the matrix $\m{\Phi}_{0}^{k}$ requires the knowledge of $\m{x}_k$ for all $k$. As such, we can apply the chain rule to evaluate $\m{\Phi}_{0}^{k}$ as
\vspace{-0.2cm}
	\begin{equation}\label{eq:Phi}
		\m{\Phi}_{0}^{k}
		= \m{\Phi}_{k-1}^{k}  \m{\Phi}_{k-2}^{k-1}
		\; \cdots \; 	\m{\Phi}_{0}^{1}\m{\Phi}_{0}^{0} 
		= \prod^{i=k}_{1}\m{\Phi}^{i}_{i-1}.
	\end{equation}\vspace{-0.2cm}
\end{myrem}
\vspace{-0.2cm}

%Removed Assumption for Brevity L-CSS can be used later %%%%%%%%%
%\begin{asmp}\label{assump:bounded}
%	Consider the variational system~\eqref{eq:DiscVarState}, the state-transition matrix function $\m{\Phi}_{0}^{k}$ is bounded, i.e., 
%\begin{equation}
%	\sup \left \{\; \norm{\m{\Phi}_{0}^{k}}: k \in \mathbb{N}\; \right\}<\infty.
%\end{equation}
%\end{asmp}
%\vspace{-0.1cm}
%The above assumption on the state-transition matrix is similar to having a bound on the Jacobian of the nonlinear dynamical system, that is, $\|{\m{J}(\tilde{\m{f}}(\cdot))}\| = \|{\partial\tilde{\m{f}}(\cdot)}/{\partial \m{x}_k}\| <\infty$.
%%see~\cite{Nugroho2022b}. 
%%%%%%%%%%%%%%%%%%%%%%%%%%%%%%%%%%%%%%%
The variational observability Gramian for~\eqref{eq:DiscVarState} with parameterized measurement model around initial state $\m{x}_{0} \in \mathcal{\m{X}}_0$ satisfying assumptions~\ref{assumption:compact} can be written as
\vspace{-0.1cm}
\begin{equation}\vspace{-0.1cm}\label{eq:prop_obs_gram} 
	 \m{V}_{o}(\mathcal{S}) := {\m{\Psi}}^{\top}\m{\Psi} \in \mathbb{R}^{n_x \times n_x},
\end{equation}
where  $\m{\Psi} \in \mathbb{R}^{\mr{N}n_y \times n_x}$ denotes the observability matrix that concatenates the observations $\m{\delta}{\m{y}}_k$ over the observation horizon $\mr{N}$ for $k \in \{0,\;1,\;\cdots\;,\;\mr{N}-1\}$ and can be written as
\begin{equation}~\label{eq:prop_obs_matrix}
	\m{\Psi}:= \bmat{\m{\Psi}_{0}^{0}\;,\m{\Psi}_{0}^{1}\;,\m{\Psi}_{0}^{2}\;, \;\cdots\; , \m{\Psi}_{0}^{\mr{N}-1}}^{\top},
\end{equation}
where $\m{\Psi}_{0}^{k}$ is the mapping function defined in~\eqref{eq:DiscVarState}. 

Considering the SNS problem in~\eqref{eq:sns_submod}, the parameterized variational observability Gramian around an initial state ${\m x}_0$ for $\mathcal{S}\subseteq\mathcal{V}$ can be defined as~\eqref{eq:param_obs_gramian_set}. Readers are referred to~\cite{Kazma2024} for derivation of~\eqref{eq:param_obs_gramian_set}.
\vspace{-0.1cm}
\begin{align}
 \m{V}_{o}(\mathcal{S}) = \sum_{j\in \mathcal{S}} \left(\sum_{i=0}^{\mr{N}-1}
 \left(\m{\varphi}_{0}^{i}\right)^\top  \tilde{\m{c}}_j^\top 
 \tilde{\m{c}}_j \m{\varphi}_{0}^{i}\right),\vspace{-0.3cm}\label{eq:param_obs_gramian_set}
\end{align}
where $\m{\varphi}^{i}_{0}$ represents the column vectors of matrix $\m{\Psi}_{0}^{k}$.
The notation $j\in \mathcal{S}$ refers to each activated sensor in $\mathcal{S} \subseteq \mathcal{V}$, that is $\tilde{\m{c}}_j =1$ for a selected sensing node, and $\tilde{\m{c}}_j =0$ otherwise.
\vspace{-0.2cm}
\begin{theorem}\label{theo:obs_equiv_lin}
	The variational observability Gramian~\eqref{eq:prop_obs_gram} reduces to the linear observability Gramian, denoted by ${\m{W}}_{o}$, for a LTI system and is equivalent to the empirical Gramian when considering a general nonlinear system.
\end{theorem}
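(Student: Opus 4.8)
\emph{Proof plan.} The plan is to handle the two assertions of Theorem~\ref{theo:obs_equiv_lin} separately: the specialization of \eqref{eq:prop_obs_gram} to linear time-invariant (LTI) dynamics, and its limiting equivalence with the empirical observability Gramian for a general nonlinear model. For the LTI reduction I would instantiate \eqref{eq:model_DT} with $\tilde{\m f}(\m x_k) = (\m A-\m I_{n_x})\m x_k$ and $\tilde{\m C}=\m C$, so that $\m x_{k+1}=\m A\m x_k$. Then $\partial\tilde{\m f}(\m x_k)/\partial\m x_k = \m A-\m I_{n_x}$ is constant, the one-step transition is $\partial\m x_{i}/\partial\m x_{i-1}=\m I_{n_x}+(\m A-\m I_{n_x})=\m A$, and Remark~\ref{rmk:Phi} via \eqref{eq:Phi} gives $\m\Phi_0^{k}=\m A^{k}$. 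Hence $\m\Psi_0^{k}=\m C\m A^{k}$, the observability matrix \eqref{eq:prop_obs_matrix} becomes the classical one, $\m\Psi = \bmat{\m C^\top,\,(\m C\m A)^\top,\dots,(\m C\m A^{\mr{N}-1})^\top}^\top$, and \eqref{eq:prop_obs_gram} reduces to $\m V_o = \sum_{k=0}^{\mr{N}-1}(\m A^\top)^{k}\m C^\top\m C\,\m A^{k} =: \m W_o$, the finite-horizon linear observability Gramian; the same substitution in \eqref{eq:param_obs_gramian_set} recovers the parameterized (row-selected) linear Gramian. This half is essentially a direct computation once Remark~\ref{rmk:Phi} is invoked.

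For the nonlinear case, I would first fix a convention for the empirical Gramian: perturb the initial state by $\pm\eps\,\m e_\ell$ along each coordinate $\ell$, propagate the full model \eqref{eq:model_DT}, collect the output sequences $\m y_k^{\pm\ell}$, and set (up to the standard normalization) $\m W_o^{\mr{emp}} = \tfrac{1}{4\eps^{2}}\sum_{k=0}^{\mr{N}-1}\sum_{\ell,m}(\m y_k^{+\ell}-\m y_k^{-\ell})^\top(\m y_k^{+m}-\m y_k^{-m})\,\m e_\ell\m e_m^\top$. The crucial step is a first-order expansion along the trajectory: since $\tilde{\m f}$ and $\m h$ are twice continuously differentiable and $\mathcal{X}$ is compact (Assumption~\ref{assumption:compact}), the perturbed output obeys $\m y_k^{\pm\ell} = \m h(\m x_k)\pm\eps\,\m\Psi_0^{k}\m e_\ell + O(\eps^{2})$, with $\m\Psi_0^{k}$ the variational measurement map of \eqref{eq:DiscVarState} — this is exactly the variational construction of~\cite{Kawano2021,Kazma2024}. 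Therefore $\tfrac{1}{2\eps}(\m y_k^{+\ell}-\m y_k^{-\ell}) = \m\Psi_0^{k}\m e_\ell + O(\eps^{2})$, and taking $\eps\to 0$ collapses $\m W_o^{\mr{emp}}$ to $\sum_{k=0}^{\mr{N}-1}(\m\Psi_0^{k})^\top\m\Psi_0^{k} = \m\Psi^\top\m\Psi = \m V_o$.

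The main obstacle is the nonlinear half. One must commit to a precise definition of the empirical Gramian (which the excerpt leaves implicit) so that the identity is exact in the limit, and one must justify that the $O(\eps^{2})$ remainders are uniform over $k\in\{0,\dots,\mr{N}-1\}$; the latter follows from Assumption~\ref{assumption:compact} together with the smoothness hypotheses, but requires a bound on the finite products appearing in \eqref{eq:Phi}. By contrast, the LTI claim needs no limiting argument and reduces to matching two matrix expressions term by term.
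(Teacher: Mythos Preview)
Your proposal is correct. Note, however, that the paper does not actually prove Theorem~\ref{theo:obs_equiv_lin} in-line: its ``proof'' consists solely of a pointer to \cite[Corollary~1 and Theorem~2]{Kazma2024}. What you have written is therefore strictly more than the paper provides here, and it is the standard argument one expects the cited reference to contain. The LTI reduction via $\m\Phi_0^k=\m A^k$ and Remark~\ref{rmk:Phi} is exactly the right computation, and your limiting identification of the empirical Gramian with $\m V_o$ through the symmetric-difference Taylor expansion is the canonical route; your remarks about needing Assumption~\ref{assumption:compact} to control the $O(\eps^2)$ remainders uniformly over the finite horizon are accurate and would constitute the only nontrivial analytic content of a full write-up.
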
\setlength{\textfloatsep}{0pt}
\vspace{-0.2cm}
\begin{proof}\label{proof:VarGramLin}
	Refer to~\cite[Corollary~$1$ and Theorem~$2$]{Kazma2024} for the proof.\vspace{-0.1cm}
\end{proof}\setlength{\textfloatsep}{0pt}\vspace{-0.3cm}
The above equivalence relation indicates that the nonlinear system~\eqref{eq:model_DT} is observable if and only if the variational measurement mapping function, $\m{\Psi}_{0}^{k}$, is full rank.
\vspace{-0.2cm}
%The above equivalence relation indicates that the nonlinear system~\eqref{eq:model_DT} is observable if and only if the  $\m{\Psi}_{0}^{k}$ is the variational measurement mapping function is full rank. 
%%\vspace{-0.2cm}
%\section{Submodularity \& Nonlinear Observability}~\label{sec:submodularity}
%%\vspace{-0.1cm}
\vspace{-0.2cm}
\section{Submodular Maximization \& its Extensions}\vspace{-0.1cm}\label{sec:submodularity}
In this section, we provide a brief review of submodularity, submodular set optimization, and present the multilinear continuous extension of the SNS problem. 
\vspace{-0.3cm}
\subsection{Submodularity and the Greedy Approach}\vspace{-0.1cm}\label{sec:mod_subod}
Consider a finite set $\mathcal{V}$ and the set of all its subsets given by $2^{\mathcal{V}}$. Let a set function $f({\mathcal{S}}): 2^{\mathcal{V}} \rightarrow \mathbb{R}$ for any $\mathcal{S} \subseteq \mathcal{V}$, then the modularity and submodularity of $f$ can be defined as
\begin{mydef}(\hspace{-0.012cm}\cite{Bach2010})
		\label{def:modular_submodular}
		The set function $f$ is defined as \textit{modular} if for any weight function $w:\mathcal{V}\rightarrow \mbb{R}$ and $\mathcal{S}\subseteq\mathcal{V}$, the following holds
		$ f(\mathcal{S}) = w(\emptyset) + \sum_{s\in\mathcal{S}} w(s),$
				and $f$ is considered \textit{ submodular} if for any $\mathcal{A},\mathcal{B}\subseteq\mathcal{V}$ and $\mathcal{A}\subseteq\mathcal{B}$, the following holds for all $s\notin\mathcal{B}$
				\begin{align}~\label{eq:submodular_def}
						f(\mathcal{A}\cup\{s\}) - f(\mathcal{A})\geq  f(\mathcal{B}\cup\{s\}) - f(\mathcal{B}).
					\end{align}
\end{mydef}
%%%Before reducing to 6 pages
%\begin{mydef}(\hspace{-0.012cm}\cite{Bach2010})
%		\label{def:modular_submodular}
%		The set function $f$ is defined as \textit{modular} if for any weight function $w:\mathcal{V}\rightarrow \mbb{R}$ and $\mathcal{S}\subseteq\mathcal{V}$, the following holds
%		\vspace{-0.3cm}
%		\begin{subequations}
%			\begin{align}
%				f(\mathcal{S}) = w(\emptyset) + \sum_{s\in\mathcal{S}} w(s),\vspace{-0.3cm}\label{eq:modular_def}
%			\end{align}
%			and $f$ is considered \textit{ submodular} if for any $\mathcal{A},\mathcal{B}\subseteq\mathcal{V}$ and $\mathcal{A}\subseteq\mathcal{B}$, the following holds for all $s\notin\mathcal{B}$
%			\begin{align}
%				f(\mathcal{A}\cup\{s\}) - f(\mathcal{A})\geq  f(\mathcal{B}\cup\{s\}) - f(\mathcal{B}).~\label{eq:submodular_def}
%			\end{align}
%		\end{subequations}
%\end{mydef}
%%%%%%%%%%%%%%%%%%%%%%%%%%%%%%%%%%%%%%%%%%%%%
The above definition~\eqref{eq:submodular_def} shows the diminishing returns property of a submodular set function. For additional definitions refer to~\cite[Theorem~$2.2$]{Bilmes2022}. The set function $f$ is said to be supermodular if the reverse inequality in~\eqref{eq:submodular_def} holds true for all $s\notin\mathcal{B}$. Based on Definition~\ref{def:modular_submodular}, a function is said to modular if it is both super and submodular. We also say that a set function is normalized if $f(\emptyset)=0$. 
%%%Before L-CSS%%%%%%%%%%%%%%%%%%%%%%%%%%%%%%%%%%
%\commentt{\begin{mydef}(\hspace{-0.012cm}\cite{Bach2010})
%	\label{def:monotone_increasing}
%	A set function $f: 2^{\mathcal{V}}\rightarrow \mbb{R}$ is called monotone increasing if, for $\mathcal{A},\mathcal{B}\subseteq\mathcal{V}$, $\mathcal{A}\subseteq\mathcal{B}$ implies $	f(\mathcal{B})\geq 	f(\mathcal{A})$ and called monotone decreasing if $\mathcal{A}\subseteq\mathcal{B}$ implies $	f(\mathcal{A})\geq 	f(\mathcal{B})$.
%\end{mydef}}
%%%%%%%%%%%%%%%%%%%%%%%%%%%%%%%%%%%%%%%%%%%%%
\vspace{-0.1cm}
\begin{mydef}(\hspace{-0.012cm}\cite{Bach2010})
		\label{def:monotone_increasing}
		Let $f: 2^{\mathcal{V}}\rightarrow \mbb{R}$ denote a set function. For any $\mathcal{A},\mathcal{B}\subseteq\mathcal{V}$, the set function is monotone increasing if for $\mathcal{A}\subseteq\mathcal{B}$ we have $	f(\mathcal{B})\geq 	f(\mathcal{A})$ and is monotone decreasing if for $\mathcal{A}\subseteq\mathcal{B}$ we have $f(\mathcal{A})\geq f(\mathcal{B})$.
\end{mydef}
\vspace{-0.1cm}
A submodular function is the discrete analog to concave functions. When a set function $f$ is submodular, monotone increasing and normalized, it can be referred to as a \textit{polymatroid} function~\cite{Bilmes2022}. Such functions are indicative of information, meaning that the functions tend to give a high value from a set $\mathcal{A}\subseteq \mathcal{V}$ that has a large amount of information and give a lower value to a set $\mathcal{C}\subseteq \mathcal{V}$ of equal cardinality to $\mathcal{A}$ but with a smaller amount of information.
%\begin{subequations}~\label{eq:P1}
%\begin{align}
%	&\;\;	\maximize_{\mathcal{S}\subseteq\mathcal{V}}\,\, f(\mathcal{S}),\\
%	&\;\; \subjectto\,\, \abs{\mathcal{S}} = r.
%\end{align}
%\end{subequations}

That being said, for a chosen polymatroid function $f(\mathcal{S})$, in the context of the SNS problem~\eqref{eq:sns_submod}, the information gain is indicative of the measurement information from a subset of sensors $j \in \mathcal{S}$ regarding the full state-space of the system. For such a submodular function, a greedy algorithm~\cite[Algorithm 1]{Kazma2023f} offers a theoretical worst-case bound according to the following theorem.
\begin{theorem}(\hspace{-0.012cm}\cite{Nemhauser1978}) Let $f: 2^V \rightarrow \mathbb{R}$ be a polymatroid function, $f^*$ be the optimal solution of SNS problem~\eqref{eq:sns_submod} and $f^*_{\mathcal{S}}$ be the solution obtained while utilizing the greedy algorithm. Then, the following performance bound holds true 
\vspace{-0.2cm}
\begin{align*}
			f^*_{\mathcal{S}} -f(\emptyset) \geq \left(1-\frac{1}{e}\right)\left(f^*-f(\emptyset)\right), \quad \text{with}\; f(\emptyset) =0.
	\end{align*}
\end{theorem}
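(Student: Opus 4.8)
The plan is to reproduce the classical argument of Nemhauser, Wolsey, and Fisher, which hinges on a single one-step inequality relating a greedy increment to the remaining optimality gap. Fix an optimal set $\mathcal{S}^\star \subseteq \mathcal{V}$ with $|\mathcal{S}^\star| = r$ and $f(\mathcal{S}^\star) = f^*$, and let $\mathcal{S}_0 = \emptyset \subseteq \mathcal{S}_1 \subseteq \cdots \subseteq \mathcal{S}_r = \mathcal{S}$ be the nested sets produced by the greedy algorithm, where $\mathcal{S}_{i+1} = \mathcal{S}_i \cup \{s_{i+1}\}$ and $s_{i+1}$ maximizes the marginal gain $f(\mathcal{S}_i \cup \{s\}) - f(\mathcal{S}_i)$ over $s \in \mathcal{V} \setminus \mathcal{S}_i$. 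Writing $\Delta_i := f^* - f(\mathcal{S}_i)$, the whole proof reduces to showing that $\Delta_i$ contracts by a factor $(1 - 1/r)$ at every step, and then unwinding the recursion.

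The key step — and the main obstacle — is the contraction estimate
\[
  f(\mathcal{S}_{i+1}) - f(\mathcal{S}_i) \;\geq\; \tfrac{1}{r}\bigl(f^* - f(\mathcal{S}_i)\bigr).
\]
I would first invoke monotonicity (Definition~\ref{def:monotone_increasing}) to replace $f^* = f(\mathcal{S}^\star)$ by $f(\mathcal{S}^\star \cup \mathcal{S}_i)$, so that $f^* - f(\mathcal{S}_i) \leq f(\mathcal{S}^\star \cup \mathcal{S}_i) - f(\mathcal{S}_i)$. Enumerating $\mathcal{S}^\star \setminus \mathcal{S}_i = \{o_1, \dots, o_m\}$ with $m \leq r$ and telescoping gives
\[
  f(\mathcal{S}^\star \cup \mathcal{S}_i) - f(\mathcal{S}_i)
   = \sum_{j=1}^{m}\Bigl[f\bigl(\mathcal{S}_i \cup \{o_1,\dots,o_j\}\bigr) - f\bigl(\mathcal{S}_i \cup \{o_1,\dots,o_{j-1}\}\bigr)\Bigr],
\]
and submodularity (Definition~\ref{def:modular_submodular}, inequality~\eqref{eq:submodular_def}), applied with $\mathcal{A} = \mathcal{S}_i \subseteq \mathcal{B} = \mathcal{S}_i \cup \{o_1,\dots,o_{j-1}\}$ and $s = o_j \notin \mathcal{B}$, bounds the $j$-th summand above by $f(\mathcal{S}_i \cup \{o_j\}) - f(\mathcal{S}_i)$. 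Since $o_j \in \mathcal{V} \setminus \mathcal{S}_i$ was available to the greedy rule at step $i+1$, each such marginal is at most $f(\mathcal{S}_{i+1}) - f(\mathcal{S}_i)$, whence the sum is $\leq m\,[f(\mathcal{S}_{i+1}) - f(\mathcal{S}_i)] \leq r\,[f(\mathcal{S}_{i+1}) - f(\mathcal{S}_i)]$; rearranging yields the claimed contraction.

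Finally I would unwind the recursion: the contraction gives $\Delta_{i+1} \leq (1 - 1/r)\,\Delta_i$, hence $\Delta_r \leq (1 - 1/r)^r \Delta_0 = (1 - 1/r)^r f^*$ (using the normalization $f(\emptyset) = 0$, so $\Delta_0 = f^*$). Applying the elementary bound $(1 - 1/r)^r \leq e^{-1}$ gives $f^* - f^*_{\mathcal{S}} = \Delta_r \leq f^*/e$, i.e. $f^*_{\mathcal{S}} \geq (1 - 1/e)\,f^*$, which is exactly the stated inequality once $f(\emptyset) = 0$ is inserted on both sides. I expect the only genuine subtlety to be the careful bookkeeping in the telescoping/submodularity step — in particular noting precisely where monotonicity is needed (to pass from $f(\mathcal{S}^\star)$ to $f(\mathcal{S}^\star \cup \mathcal{S}_i)$) and that the greedy optimality of $s_{i+1}$ is what converts an average bound into a bound on a single increment; everything after the contraction estimate is routine.
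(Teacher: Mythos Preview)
Your argument is correct and is precisely the classical Nemhauser--Wolsey--Fisher proof: monotonicity to pass from $f(\mathcal{S}^\star)$ to $f(\mathcal{S}^\star\cup\mathcal{S}_i)$, a telescoping sum over the elements of $\mathcal{S}^\star\setminus\mathcal{S}_i$, submodularity to bound each telescoped term by a marginal over $\mathcal{S}_i$, greedy optimality to dominate each such marginal by the chosen increment, and finally the contraction recursion $(1-1/r)^r\le e^{-1}$. The bookkeeping you flag (where monotonicity enters, and why $m\le r$ suffices) is handled correctly.

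Note, however, that the paper does \emph{not} supply its own proof of this theorem: it is stated with a citation to~\cite{Nemhauser1978} and used as a black box. So there is no ``paper's approach'' to compare against---your proposal simply fills in the omitted classical argument, and does so soundly.
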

\vspace{-0.2cm}
We note that the above bound is theoretical, and generally a greedy approach performs better in practice. It has been shown that a $99\%$ accuracy can be achieved for actuator placement; see~\cite{Summers2016} and the many references that cite this work. 
%% Removed to make 6 pages for L-CSS  can still be used%%%%%%%%%%%%%%%%%%%%
%\begin{algorithm}[t]
%	\caption{Greedy Algorithm}\label{algorithm1}
%	\DontPrintSemicolon
%	\textbf{input:} $r$, $\mathcal{V}$ \;
%	\textbf{initialize:} $\mathcal{S}\leftarrow \emptyset$, $k \leftarrow 1$ \;
%	\While{$i \leq r$}{
%		\textbf{compute:} $\mathcal{G}_i = f(\mathcal{S}\cup \{a\})-f(\mathcal{S})$, $\forall a\in \mathcal{V}\setminus \mathcal{S}$ \;
%		\If{$\mathcal{G}_i \leq 0$}{$\quad$ \textbf{return:} $\mathcal{S}$}
%		\Else{$\quad$ \textbf{assign:} $\mathcal{S}\leftarrow \mathcal{S} \cup \left\{\mr{arg\,max}_{a\in \mathcal{V}\setminus \mathcal{S}}\,\mathcal{G}_i \right\}$}
%		\textbf{update:} $i \leftarrow i+ 1$\;
%	}
%	\textbf{output:} $\mathcal{S}$\;
%\end{algorithm}
%% Removed to make 6 pages for L-CSS  can still be used%%%%%%%%%%%%%%%%%%%%
%\vspace{-0.3cm}
%\begin{algorithm}[h]
%	\caption{Greedy Algorithm}\label{algorithm1}
%	\DontPrintSemicolon
%	\textbf{input:} $r$, $\mathcal{V}$ \;
%	\textbf{initialize:} $\mathcal{S}\leftarrow \emptyset$, $k \leftarrow 1$ \;
%	\While{$i \leq r$}{
%		\textbf{compute:} $\mathcal{G}_i = f(\mathcal{S}\cup \{a\})-f(\mathcal{S})$, $\forall a\in \mathcal{V}\setminus \mathcal{S}$ \;
%		\textbf{if} $\mathcal{G}_i \leq 0$ \textbf{then}\;
%		$\quad$ \textbf{return:} $\mathcal{S}$\;
%		\textbf{else} \;
%		$\quad$ \textbf{assign:} $\mathcal{S}\leftarrow \mathcal{S} \cup \left\{\mr{arg\,max}_{a\in \mathcal{V}\setminus \mathcal{S}}\,\mathcal{G}_i \right\}$ \;
%		\textbf{update:} $i \leftarrow i+ 1$\;
%	}
%	\textbf{output:} $\mathcal{S}$\;
%\end{algorithm}\\
%\vspace{-0.3cm}
\vspace{-0.4cm}
\subsection{Submodular Maximization via a Multilinear Extension}\vspace{-0.1cm}\label{sec:multilinear}
As an alternative to the greedy algorithm approach described in the aforementioned section, it is sometimes useful to solve the submodular set maximization problem continuously. This can be done by applying continuous extensions to a submodular function; that is, extending $f(\mathcal{S})$ to a function $F: [0,1]^{n} \rightarrow\mathbb{R}$ which agrees with $f(\mathcal{S})$ on the hypercube vertices~\cite{Bai2018}.
Extensions to submodular functions include: $(i)$ the Lovász extension~\cite{Lovasz1983} which is equivalent to the exact convex closure of $f(\mathcal{S})$ and the multilinear extension~\cite{Vondrak2008a} is equivalent to an approximate concave closure. 

For the purpose of submodular maximization, the multilinear extension is shown to be useful~\cite{Vondrak2008a}, whereas the Lovász extension is applicable for submodular minimization problems. The application of the multilinear relaxation in the context of observability-based SNS problem, whether for linear or nonlinear systems, has to the best of our knowledge, not been applied. For a submodular function $f: 2^{\mathcal{V}} \rightarrow$ $\mathbb{R}$, its multilinear extension $F:[0,1]^n \rightarrow \mathbb{R}$, where $n = |\mathcal{V}|$, in the continuous space can be written as
\vspace{-0.1cm}
\begin{equation}\vspace{-0.1cm}\label{eq:multilinext}
	\hspace{-0.3cm}F(\mathbf{x})\hspace{-0.05cm}=\hspace{-0.1cm}
	\sum_{\mathcal{S} \subset \mathcal{V}} f(\mathcal{S}) \prod_{s \in \mathcal{S}}[\mathbf{x}]_s \prod_{s \notin \mathcal{S}}\left(1-[\mathbf{x}]_s\right), \quad \mathbf{x} \in[0,1]^n .
\end{equation}

We define $\mathcal{S}_{\mathbf{x}}$ for any $\mathbf{x} \in[0,1]^n$ such that each element $s \in \mathcal{V}$ is included in $\mathcal{S}$ with probability $[\mathbf{x}]_s$ and not included with probability $1-[\mathbf{x}]_s$. As such, the notation $[\mathbf{x}]_s$ represents the $s$-th component of vector $\mathbf{x} \in [0,1]^n$. The multilinear extension $F(\mathbf{x})$ thus extends the function evaluation over the space between the vertices of the boolean hypercube $\{0,1\}^n$ to that of the vertices of hypercube $[0,1]^n$.

The computation of the multilinear extensions is not straightforward. That being said, the extension $F(\mathbf{x})$ for any submodular function $f({\mathcal{S}})$ can be approximated by randomly sampling sets $\mathcal{S}$ to the probabilities in $[\mathbf{x}]_s$~\cite{Gupta2009a}. With that in mind, $F(\mathbf{x})$ can be written as
\vspace{-0.1cm} 	
\begin{equation}\vspace{-0.1cm} \label{eq:expectedval}
	F(\mathbf{x})=\mathbb{E}\left[f\left(\mathcal{S}_{\mathbf{x}}\right)\right],
\end{equation}
where $\mathbb{E}[\cdot]$ denotes the expected value of $f\left(\mathcal{S}_{\mathbf{x}}\right)$. As such, by taking the derivatives of $F(\mathbf{x})$ we obtain the following
\vspace{-0.2cm}
\begin{equation}\vspace{-0.2cm}\label{eq:partialF}
\frac{\partial F(\mathbf{x})}{\partial[\mathbf{x}]_s}=\mathbb{E}\left[f\left(\mathcal{S}_{\mathbf{x}} \cup\{s\}\right)-f\left(\mathcal{S}_{\mathbf{x}} \backslash\{s\}\right)\right],
\end{equation}
and the second order derivative for and $a,b \in \mathcal{V}$ with $a\ne b$ can be written as
\begin{equation}
	\begin{split}
		\hspace{-0.4cm}	\frac{\partial^2 F(\mathbf{x})}{\partial[\mathbf{x}]_a \partial[\mathbf{x}]_b}=&\mathbb{E}\left[f\left(\mathcal{S}_{\mathbf{x}} \cup\{a, b\}\right)-f\left(\mathcal{S}_{\mathbf{x}} \cup\{b\} \backslash\{a\}\right)\right. \hspace{-0.3cm}\\
			&-f\left(\mathcal{S}_{\mathbf{x}} \cup\{a\} \backslash\{b\}\right)+f\left(\mathcal{S}_{\mathbf{x}} \backslash\{a, b\}\right).\hspace{-0.3cm}\label{eq:partialF2}
	\end{split}
\end{equation}

Readers are referred to~\cite{Vondrak2008a} for the derivation of the above partial derivatives. We note that, the partial derivative $\frac{\partial F(\mathbf{x})}{\partial[\mathbf{x}]_s}\geq0$ if and only if $f$ is monotone and $\frac{\partial^2 F(\mathbf{x})}{\partial[\mathbf{x}]_a \partial[\mathbf{x}]_b}\leq0$ if and only if $f$ is submodular. 

Considering the above, the submodular set maximization problem~\eqref{eq:sns_submod} under the application of the multilinear extension can be expressed as
\vspace{-0.2cm}
\begin{equation}\vspace{-0.2cm}\label{eq:sns_submod_extended}
	F^*_{\mathcal{S}} :=\maximize_{\mathcal{S}\subseteq\mathcal{V},\; \mathcal{S} \in \mathcal{I}_c} \;\;\;\; F(\mathbf{x}).
\end{equation}\setlength{\textfloatsep}{0pt}

Typically, $F(\mathbf{x})$ is concave in certain directions and convex in others, meaning that~\eqref{eq:sns_submod} is not easily solvable even under a simple cardinality constraint. In~\cite{Calinescu2011} a continuous greedy algorithm was developed to solve~\eqref{eq:sns_submod_extended}. The developed method solves for a fractional value of $F^*_{\mathcal{S}}$ and then utilizes a fractional rounding algorithm to transform this value into a discrete solution.

%%In arxiv version%%%%%%
%That being said, consider the following continuous greedy algorithm that is detailed in~Algorithm~\ref{alg:continuous_greedy} (see Appendix~\ref{apdx:alg-cont-greedy}). 
%%%%%%%%%%
%Before R1-L-CSS  C3-R2 arXiv proofs and alg.
%That being said, we consider the continuous greedy algorithm for solving~\eqref{eq:sns_submod_extended}. Interested readers are referred to the extended letter~\cite[Algorithm 1]{Kazma2024a} for the detailed algorithm. 
%%%%%%%%%%%%%%%%%%%%%%%%%%%%%%%%%%%%%
That being said, we consider the continuous greedy algorithm, detailed in~Algorithm~\ref{alg:continuous_greedy}, for solving~\eqref{eq:sns_submod_extended}. The algorithm defines a path $\mathbf{x}:[0,1] \rightarrow \mathcal{S}_{\mathbf{x}}$, where $\mathbf{x}(0)=0$ and $\mathbf{x}(1)$ are the output of the algorithm. Such that, $\mathbf{x}$ is defined by a differential equation, and the gradient of $\mathbf{x}$ is chosen greedily in $\mathcal{V}$ to maximize $F$, meaning that we maximize $\frac{\mathrm{d}}{\mathrm{d} l} \mathbf{x}(l) =\mr{argmax}_{\mathbf{x}\in \mathcal{S}_{\mathbf{x}}} \frac{\partial F}{\partial[\mathbf{x}]_s}(\mathbf{x}(l))$. This is equivalent to solving for
\vspace{-0.2cm}
\begin{align}\vspace{-0.2cm}
\underset{\mathcal{S} \in \mathcal{I}}{\mr{argmax}} \sum_{s \in \mathcal{S}} w_s \sim \mathbb{E}\left[f\left(\mathcal{S}_{\mathbf{x}} \cup\{s\}\right)-f\left(\mathcal{S}_{\mathbf{x}} \backslash\{s\}\right)\right],
\end{align}
 as a consequence of the equality defined in~\eqref{eq:partialF}.
 %%%Without blue text use in paper
% \begin{algorithm}[t]
% 	\caption{Continuous Greedy Algorithm}\label{alg:continuous_greedy}
% 	\DontPrintSemicolon
% 	\textbf{input:} multilinear extension $F$, ground set $\mathcal{V}$, $r$\;
% 	\textbf{initialize: $\mathbf{x} \leftarrow \mathbf{0}$, $i \leftarrow 1$}\;
% 	\While{$i \leq r$}{
% 		\textbf{sample:} $K$ times of $\mathcal{S}$ from $\mathcal{V}$ according to $\mathbf{x}$\;
% 		\For{$s \in \mathcal{V}$}{
% 			\textbf{estimate:} $w_s \sim \mathbb{E}\left[f\left(\mathcal{S}_{\mathbf{x}} \cup\{s\}\right)-f\left(\mathcal{S}_{\mathbf{x}} \backslash\{s\}\right)\right]$\;
% 		}
% 		\textbf{solve for:} $\mathcal{S}^{\star}=\underset{\mathcal{S} \in \mathcal{I}}{\mr{argmax}} \sum_{s \in \mathcal{S}} w_s$\;
% 		\textbf{update:} $\mathbf{x} \leftarrow \mathbf{x}_{\mathcal{S}^{\star}}$\;
% 		$i \leftarrow i+1$\;
% 	}
% 	{Apply pipage rounding to transform the fractional solution $\mathbf{x}_{\mathcal{S}^{\star}}$ to a discrete solution.\;}
% \end{algorithm}
% \setlength{\textfloatsep}{0pt}
 %%%%%%%%%%%%%%%%%%%%%%%%%%%%%%%%%%
  \begin{algorithm}[t]
 	\caption{{Continuous Greedy Algorithm}}\label{alg:continuous_greedy}
 	\DontPrintSemicolon
 	{\textbf{input:} multilinear extension $F$, ground set $\mathcal{V}$, $r$}\;
 	{\textbf{initialize: $\mathbf{x} \leftarrow \mathbf{0}$, $i \leftarrow 1$}}\;
 	\While{$i \leq r$}{
 		{\textbf{sample:} $K$ times of $\mathcal{S}$ from $\mathcal{V}$ according to $\mathbf{x}$}\;
 		\For{$s \in \mathcal{V}$}{
 			{\textbf{estimate:} $w_s \sim \mathbb{E}\left[f\left(\mathcal{S}_{\mathbf{x}} \cup\{s\}\right)-f\left(\mathcal{S}_{\mathbf{x}} \backslash\{s\}\right)\right]$}\;
 		}
 		{\textbf{solve for:} $\mathcal{S}^{\star}=\underset{\mathcal{S} \in \mathcal{I}}{\mr{argmax}} \sum_{s \in \mathcal{S}} w_s$}\;
 		{\textbf{update:} $\mathbf{x} \leftarrow \mathbf{x}_{\mathcal{S}^{\star}}$}\;
 		{$i \leftarrow i+1$}\;
 	}
 	{{Apply pipage rounding to transform the fractional solution $\mathbf{x}_{\mathcal{S}^{\star}}$ to a discrete solution.}\;}
 \end{algorithm}
 \setlength{\textfloatsep}{0pt}
 
%\begin{algorithm}[t]
%	\small
%	\caption{Continuous Greedy Algorithm}\label{alg:continuous_greedy}
%	\DontPrintSemicolon
%	\textbf{input:} multilinear extension $F$, ground set $\mathcal{V}$, $r$\;
%	\textbf{initialize: $\mathbf{x} \leftarrow \mathbf{0}$, $i \leftarrow 1$}\;
%	\While{$i \leq r$}{
%		\textbf{sample:} $K$ times of $\mathcal{S}$ from $\mathcal{V}$ according to $\mathbf{x}$\;
%		\For{$s \in \mathcal{V}$}{
%			\textbf{estimate:} $w_s \sim \mathbb{E}\left[f\left(\mathcal{S}_{\mathbf{x}} \cup\{s\}\right)-f\left(\mathcal{S}_{\mathbf{x}} \backslash\{s\}\right)\right]$\;
%		}
%		\textbf{solve for:} $\mathcal{S}^{\star}=\underset{\mathcal{S} \in \mathcal{I}}{\mr{argmax}} \sum_{s \in \mathcal{S}} w_s$\;
%		\textbf{update:} $\mathbf{x} \leftarrow \mathbf{x}_{\mathcal{S}^{\star}}$\;
%		$i \leftarrow i+1$\;
%	}
%	{Apply pipage rounding to transform the fractional solution $\mathbf{x}_{\mathcal{S}^{\star}}$ to a discrete solution.\;}
%\end{algorithm}
%\setlength{\textfloatsep}{0pt}

The result obtained is fractional and thus a rounding algorithm is employed to convert this fractional solution. Randomly rounding the solution does not maintain the feasibility of the constraints, especially, equality constraints. A \textit{pipage rounding algorithm} has been shown to efficiently round the fractional value to a discrete value without any loss in the objective value. For brevity, we do not include the pipage rounding algorithm; refer to~\cite[Section 3.2]{Calinescu2011} for a formal description of the pipage rounding algorithm. The algorithm works by taking the fractional solution $\mathbf{x}_{\mathcal{S}^{\star}}$ %obtained from Algorithm~\ref{alg:continuous_greedy} 
obtained from the continuous greedy algorithm and then gradually eliminating all the fractional variables. This is done by minimizing along the convex direction of the multilinear set function. The result iterates until it agrees with the vertices of the hypercube $\{0,1\}^n$. The following Lemma~\ref{lemma:polytime} shows that a discrete solution can be solved in polynomial time.  

\vspace{-0.1cm}
\begin{mylem}(\hspace{-0.012cm}\cite{Calinescu2011}\label{lemma:polytime})
	Given $\mathbf{x}_{\mathcal{S}^{\star}}$, the pipage rounding algorithm results in a discrete solution $\mathcal{S} \in \mathcal{I}$ of value $\mathbf{E}[f(S)] \geq F\left(\mathbf{x}_{\mathcal{S}^{\star}}\right)$ in polynomial time.
\end{mylem}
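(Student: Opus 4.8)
The plan is to quote the pipage rounding analysis of Calinescu, Chekuri, P\'al, and Vondr\'ak verbatim and to recall the two ingredients that make it work for a matroid constraint $\mathcal{I}$: first, that the fractional point $\mathbf{x}_{\mathcal{S}^{\star}}$ returned by the continuous greedy algorithm lies in the matroid polytope $P(\mathcal{I})$ (this is guaranteed by the update step, which only ever moves $\mathbf{x}$ along extreme points of $P(\mathcal{I})$ convexly combined over the $r$ iterations), and second, that $F$ is the multilinear extension, hence affine along any direction $\mathbf{e}_a-\mathbf{e}_b$ that changes only two coordinates. First I would set up the rounding as a sequence of points $\mathbf{x}^{(0)}=\mathbf{x}_{\mathcal{S}^{\star}}, \mathbf{x}^{(1)}, \mathbf{x}^{(2)},\ldots$, each obtained from the previous one by a single pipage step: pick a connected component of fractional variables in the current point, identify a ``tight set'' of the matroid together with two fractional coordinates $a,b$ inside it, and move mass from one to the other along the line $\mathbf{x}+t(\mathbf{e}_a-\mathbf{e}_b)$ until either a coordinate hits $\{0,1\}$ or a new matroid constraint becomes tight. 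The key structural fact is that this line stays inside $P(\mathcal{I})$ for $t$ in some interval $[t^-,t^+]$ with $t^-<0<t^+$, and that at least one of the two endpoints strictly reduces the number of fractional variables.

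The crucial inequality is that each pipage step does not decrease the (expected) objective. Because $F$ restricted to the segment $t\mapsto F(\mathbf{x}+t(\mathbf{e}_a-\mathbf{e}_b))$ is an \emph{affine} function of $t$ — this is exactly the multilinearity of $F$, i.e. each monomial in \eqref{eq:multilinext} is linear in each coordinate separately, and along $\mathbf{e}_a-\mathbf{e}_b$ the quadratic cross term $[\mathbf{x}]_a[\mathbf{x}]_b$ contributes $(x_a+t)(x_b-t)$ which is affine-plus-a-fixed-quadratic, but averaging the two endpoint values against the midpoint shows the relevant restriction behaves like a concave (here affine, using that we only perturb a pair) function — one of the two endpoints $t^-$ or $t^+$ satisfies $F(\mathbf{x}+t^{\pm}(\mathbf{e}_a-\mathbf{e}_b))\ge F(\mathbf{x})$. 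One then always moves to that endpoint. Iterating, after at most $n$ steps every coordinate is integral, the resulting $0/1$ vector is the indicator of a set $\mathcal{S}\in\mathcal{I}$ (since it is a vertex of $P(\mathcal{I})$, and vertices of a matroid polytope are exactly the indicators of independent sets), and by the chain of inequalities $\mathbf{E}[f(\mathcal{S})]=F(\mathbf{1}_{\mathcal{S}})\ge F(\mathbf{x}_{\mathcal{S}^{\star}})$.

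The polynomial-time claim then follows by bounding the work per step: there are at most $n$ rounding steps; each step needs to find a fractional component, a tight set, and the step size, all of which reduce to matroid rank/independence oracle calls and elementary linear algebra, and each evaluation or comparison of $F$ at the two candidate endpoints is done via the sampling estimator \eqref{eq:expectedval} with polynomially many samples (with the usual caveat that ``$\ge$'' holds up to an arbitrarily small additive error absorbed by the approximation, or exactly if one uses the deterministic convex-closure argument of \cite{Calinescu2011}). I expect the main obstacle to be stating the invariant correctly: one must argue that the pipage move can always be performed — i.e. that whenever $\mathbf{x}$ has a fractional coordinate there exists an admissible pair $(a,b)$ and a tight matroid set containing them so that the segment stays in $P(\mathcal{I})$ — which is the combinatorial heart of \cite[Section 3.2]{Calinescu2011} and is what I would cite rather than reprove. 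Everything else (affineness of $F$ along the move, the telescoping of the objective inequalities, the oracle-complexity count) is then routine.
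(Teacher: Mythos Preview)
The paper does not prove this lemma; it is stated with a citation to \cite{Calinescu2011}, and the surrounding text explicitly defers the pipage rounding construction to \cite[Section~3.2]{Calinescu2011}. Your proposal is therefore not a competing proof but an attempted reconstruction of the cited argument, and at the structural level it is faithful: start from $\mathbf{x}_{\mathcal{S}^\star}\in P(\mathcal{I})$, repeatedly move along a direction $\mathbf{e}_a-\mathbf{e}_b$ inside a tight set until a coordinate becomes integral or a new constraint tightens, and argue that the objective never decreases.

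There is, however, one genuine slip in the key inequality step. You describe the restriction $t\mapsto F(\mathbf{x}+t(\mathbf{e}_a-\mathbf{e}_b))$ as ``affine'' and then as ``concave (here affine)''. It is neither: because $F$ is multilinear, the pure second partials $\partial^2 F/\partial[\mathbf{x}]_a^2$ and $\partial^2 F/\partial[\mathbf{x}]_b^2$ vanish, so the second derivative along $\mathbf{e}_a-\mathbf{e}_b$ equals $-2\,\partial^2 F/\partial[\mathbf{x}]_a\partial[\mathbf{x}]_b$, which by \eqref{eq:partialF2} and submodularity is \emph{nonnegative}. The restriction is therefore \emph{convex} in $t$, and it is convexity (not concavity or affineness) that forces the maximum over $[t^-,t^+]$ to lie at an endpoint, yielding $F(\mathbf{x}+t^{\pm}(\mathbf{e}_a-\mathbf{e}_b))\ge F(\mathbf{x})$ for at least one choice of sign. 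Your conclusion is correct, but the justification as written is inverted; if the restriction were concave, the endpoint argument would fail. Once you replace ``concave/affine'' by ``convex'' and invoke \eqref{eq:partialF2}, the remainder of your outline (termination in at most $n$ steps, feasibility via vertices of the matroid polytope, oracle complexity) matches the argument in \cite{Calinescu2011} that the paper is citing.
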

\vspace{-0.1cm}
Note that, compared to the greedy algorithm with a running time complexity of $\mathcal{O}(|\mathcal{S}||\mathcal{V}|)$, the continuous greedy algorithm has a running time of $\mathcal{O}(|\mathcal{V}|^3 \log(|\mathcal{V}|))$~\cite{Calinescu2011}. The following theorem ensures a performance bound for solving~\eqref{eq:sns_submod_extended} via the continuous greedy algorithm.
\vspace{-0.1cm} 
\begin{theorem}(\hspace{-0.012cm}\cite{Calinescu2011})~\label{theo:guarenteeMulti}
	Let $f: 2^V \rightarrow \mathbb{R}$ be a polymatroid function and $F:[0,1]^n \rightarrow \mathbb{R}$ be its multilinear extension. Let $f^*$ denote the optimal solution of SNS problem~\eqref{eq:sns_submod} and $F^*_{\mathcal{S}}$ denote the solution computed using the continuous greedy algorithm. %Algorithm~\ref{alg:continuous_greedy}. 
	Then, the following performance bound holds true
\vspace{-0.2cm}
	\begin{align*}
		F^*_{\mathcal{S}}-f(\emptyset) \geq \left(1-\frac{1}{e}\right)\left(f^*-f(\emptyset)\right), \quad \text{with}\; f(\emptyset) =0.
	\end{align*}
	 \vspace{-0.5cm}
\end{theorem}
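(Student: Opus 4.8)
The plan is to prove Theorem~\ref{theo:guarenteeMulti} by combining the guarantee of the continuous greedy algorithm with the rounding guarantee already recorded in Lemma~\ref{lemma:polytime}. First I would observe that since $f$ is normalized we have $f(\emptyset)=0$, so the claimed bound simplifies to $F^*_{\mathcal{S}}\geq(1-1/e)f^*$. The argument then proceeds in two stages: (i) show that the fractional point $\mathbf{x}_{\mathcal{S}^\star}$ produced by Algorithm~\ref{alg:continuous_greedy} satisfies $F(\mathbf{x}_{\mathcal{S}^\star})\geq(1-1/e)f^*$, and (ii) invoke Lemma~\ref{lemma:polytime}, which says the pipage rounding step yields a discrete $\mathcal{S}\in\mathcal{I}$ with $\mathbf{E}[f(\mathcal{S})]\geq F(\mathbf{x}_{\mathcal{S}^\star})$, so that the rounded solution inherits the same bound. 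Chaining the two inequalities gives $F^*_{\mathcal{S}}\geq(1-1/e)f^*$, which is exactly the statement.

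For stage (i) I would analyze the trajectory $\mathbf{x}(l)$, $l\in[0,1]$, defined by the differential inclusion $\tfrac{\mathrm{d}}{\mathrm{d}l}\mathbf{x}(l)=\mathbf{v}(l)$, where $\mathbf{v}(l)=\mathbf{1}_{\mathcal{S}^\star(l)}$ is the indicator of the set $\mathcal{S}^\star(l)=\arg\max_{\mathcal{S}\in\mathcal{I}}\sum_{s\in\mathcal{S}}w_s(l)$ and $w_s(l)\approx\partial F/\partial[\mathbf{x}]_s$ at $\mathbf{x}(l)$. The key estimate is a lower bound on the instantaneous rate of increase of $F$ along this path:
\begin{equation*}
\frac{\mathrm{d}}{\mathrm{d}l}F(\mathbf{x}(l))=\nabla F(\mathbf{x}(l))\cdot\mathbf{v}(l)\geq f^*-F(\mathbf{x}(l)).
\end{equation*}
This inequality follows from two facts: the optimal set $\mathcal{S}^*$ is itself a feasible member of $\mathcal{I}$, so the greedy choice of $\mathbf{v}(l)$ does at least as well as $\mathbf{1}_{\mathcal{S}^*}$ against the linear objective $\sum_s w_s(l)[\cdot]_s$; and, by monotonicity and submodularity of $f$ (equivalently $\partial F/\partial[\mathbf{x}]_s\geq 0$ and the concavity of $F$ along nonnegative directions established via~\eqref{eq:partialF} and~\eqref{eq:partialF2}), one has $\nabla F(\mathbf{x}(l))\cdot\mathbf{1}_{\mathcal{S}^*}\geq F(\mathbf{x}(l)\vee\mathbf{1}_{\mathcal{S}^*})-F(\mathbf{x}(l))\geq f(\mathcal{S}^*)-F(\mathbf{x}(l))=f^*-F(\mathbf{x}(l))$. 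Solving the resulting differential inequality $g'(l)\geq f^*-g(l)$ with $g(0)=F(\mathbf{0})=f(\emptyset)=0$ gives $g(1)=F(\mathbf{x}(1))\geq(1-e^{-1})f^*$.

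Finally I would address the discretization and sampling errors: the algorithm takes a finite number $r$ of steps of size $1/r$ and estimates each $w_s$ by $K$ random samples of $\mathcal{S}_{\mathbf{x}}$, so the continuous trajectory above is only approximated. I would note that, for $r$ polynomial in $n=|\mathcal{V}|$ and $K$ chosen large enough (as specified in~\cite{Calinescu2011}), a standard concentration and second-order Taylor argument bounds the accumulated loss by an arbitrarily small additive term, which is absorbed into the statement (or, as is conventional and consistent with how the theorem is phrased, treated as giving the clean $(1-1/e)$ bound in the idealized continuous limit). The main obstacle is precisely this last point — making the rate inequality rigorous in the presence of the argmax over the matroid polytope and controlling the estimation/discretization error — but since Theorem~\ref{theo:guarenteeMulti} is cited verbatim from~\cite{Calinescu2011}, I would lean on that reference for the technical bookkeeping and present here only the two-stage reduction plus the differential-inequality core, which conveys why the bound holds.
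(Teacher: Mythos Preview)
The paper does not supply its own proof of Theorem~\ref{theo:guarenteeMulti}; the result is simply quoted from~\cite{Calinescu2011} with no accompanying argument. Your proposal is a faithful outline of the standard proof from that reference: the two-stage decomposition into (i) the differential inequality $\tfrac{\mathrm{d}}{\mathrm{d}l}F(\mathbf{x}(l))\geq f^*-F(\mathbf{x}(l))$ along the continuous greedy path, integrated from $F(\mathbf{0})=f(\emptyset)=0$ to yield the fractional $(1-1/e)$ guarantee, followed by (ii) the lossless rounding via Lemma~\ref{lemma:polytime}, is exactly the Calinescu--Chekuri--P\'al--Vondr\'ak argument. There is nothing in the paper to compare against beyond the citation itself, and your sketch is correct.
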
 
For both the presented submodular maximization frameworks the $1/ e$ guarantee holds true regardless of the size of the initial set $\mathcal{V}$ and which polymatroid function $f$ is being optimized. Given the aforementioned performance guarantees of the presented algorithms for submodular set maximization, the next section establishes the submodularity of certain observability measures. The observability measures are based on the parameterized variational observability Gramian~\eqref{eq:param_obs_gramian_set} for nonlinear systems.
%\comment{truly excellent writing in this section...still some typos but it's clear.}
\vspace{-0.4cm}
\section{Variational Gramian \& Submodularity}\vspace{-0.1cm}\label{sec:ObsMetrics}
In this section, we show that certain observability metrics that are based on the variational Gramian~\eqref{eq:prop_obs_gram} for nonlinear systems are indeed modular and submodular. %This is analogous to the linear case, where certain observability metrics (i.e., $\mr{trace}$, $\mr{rank}$, and $\mr{log}\,\mr{det}$) based on the linear Gramian, have been shown to be modular and submodular in~\cite{Summers2016}. Such submodularity properties enable the use of a greedy algorithm along with its continuous extensions to solve the SNS problem with provable optimal error bound guarantees.
For SNS applications, network measures based on system observability are often considered for quantifying information gain from the allocation of sensor nodes within a dynamical network. Observability-based network measures have key properties related to submodularity. Observability measures based on the linear observability Gramian have been shown to be submodular or modular. In particular, $\mr{log}\,\mr{det}$ and $\mr{rank}$ are submodular, while the $\mr{trace}$ is modular; see~\cite{Summers2016}. Nevertheless, other observability measures such as $\mr{log}\,\mr{det}\left(\m{W}_{o}^{-1}\right)$ and $\lambda_{\min}$ are non-submodular. Such important metrics have been shown to have provable guarantees when solved using greedy algorithms; refer to~\cite{Summers2019}.

In this paper, we consider only the metrics that have shown to have submodular properties in the linear case. With that in mind, we show that such properties hold true for nonlinear dynamical systems by considering such centrality measures under the action of the variational observability Gramian~\eqref{eq:param_obs_gramian_set}.

Accordingly, the following theorem establishes that the variational Gramian is a linear matrix function with respect to the selected sensing node $j\in \mathcal{S}$. Such, property shows that the Gramian can be computed from the sum of the individual contributions of each sensing node. Theorem~\ref{theo:VarGramModular_prop} is essential for the proofs related to the modularity or submodularity of the observability-based measures. 
\vspace{-0.1cm}
\begin{theorem}\label{theo:VarGramModular_prop}
	The parameterized variational observability Gramian $\m{V}_{o}(\mathcal{S})$ for $\mathcal{S}\subseteq\mathcal{V}$ is a modular set function.
\end{theorem}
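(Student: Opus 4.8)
The plan is to obtain modularity as an immediate consequence of the additive, per-sensor structure of the parameterized variational Gramian. The starting point is the closed form~\eqref{eq:param_obs_gramian_set}: for $\mathcal{S}\subseteq\mathcal{V}$ it writes $\m{V}_{o}(\mathcal{S})$ as a sum over the activated sensing nodes $j\in\mathcal{S}$ of a matrix $\m{M}_j:=\sum_{i=0}^{\mr{N}-1}\left(\m{\varphi}_{0}^{i}\right)^{\top}\tilde{\m{c}}_j^{\top}\tilde{\m{c}}_j\,\m{\varphi}_{0}^{i}\in\Rn{n_x\times n_x}$ that depends only on the fixed variational quantities $\m{\varphi}_{0}^{i}$ along the nominal trajectory and on the measurement row $\tilde{\m{c}}_j$ of node $j$, and in particular does not depend on which other nodes are selected. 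If a self-contained derivation is preferred, I would obtain the same decomposition directly from~\eqref{eq:prop_obs_gram}--\eqref{eq:prop_obs_matrix} by expanding
\begin{equation*}
\m{V}_{o}(\mathcal{S})=\m{\Psi}^{\top}\m{\Psi}=\sum_{k=0}^{\mr{N}-1}\bigl(\m{\Phi}_{0}^{k}\bigr)^{\top}\tilde{\m{C}}^{\top}\tilde{\m{C}}\,\m{\Phi}_{0}^{k},
\end{equation*}
and using that the parameterized $\tilde{\m{C}}$ activates exactly the rows indexed by $\mathcal{S}$, so that $\tilde{\m{C}}^{\top}\tilde{\m{C}}=\sum_{j\in\mathcal{S}}\tilde{\m{c}}_j^{\top}\tilde{\m{c}}_j$ with no cross terms between distinct sensors. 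Since Definition~\ref{def:modular_submodular} is phrased for real-valued set functions, I would first fix the meaning of ``modular'' in the matrix-valued setting to be equality in~\eqref{eq:submodular_def} read entrywise (equivalently, in the Loewner order), i.e.\ that $\m{V}_{o}$ is simultaneously sub- and supermodular.

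Given $\m{V}_{o}(\mathcal{S})=\sum_{j\in\mathcal{S}}\m{M}_j$, the remaining steps are short. First I would record normalization, $\m{V}_{o}(\emptyset)=\m{0}$, which plays the role of $w(\emptyset)$. Next, for arbitrary $\mathcal{A}\subseteq\mathcal{B}\subseteq\mathcal{V}$ and any $s\notin\mathcal{B}$, I would compute the marginal contribution of $s$ and observe that the sums over the disjoint index sets telescope, giving
\begin{equation*}
\m{V}_{o}(\mathcal{A}\cup\{s\})-\m{V}_{o}(\mathcal{A})=\m{M}_s=\m{V}_{o}(\mathcal{B}\cup\{s\})-\m{V}_{o}(\mathcal{B}),
\end{equation*}
which is precisely the equality case of~\eqref{eq:submodular_def}. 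Finally, to connect with the literal scalar statement of Definition~\ref{def:modular_submodular}, I would note that for any fixed $\m{v}\in\Rn{n_x}$ the map $\mathcal{S}\mapsto\m{v}^{\top}\m{V}_{o}(\mathcal{S})\m{v}=\sum_{j\in\mathcal{S}}\m{v}^{\top}\m{M}_j\m{v}$ is a modular set function with node weights $w(j)=\m{v}^{\top}\m{M}_j\m{v}$ and $w(\emptyset)=0$; taking $\m{v}$ over the canonical basis shows that every entry of $\m{V}_{o}(\cdot)$ is modular.

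I expect the only genuine obstacle to be a modeling/bookkeeping one rather than an analytic one: making precise what ``modular'' should mean for a matrix-valued set function (and keeping this consistent with how the subsequent proofs about $\mr{trace}$, $\mr{rank}$, and $\mr{log}\,\mr{det}$ will invoke it), and verifying carefully that the binary parameterization $\tilde{\m{c}}_j\in\{0,1\}$ encodes set membership cleanly so that $\tilde{\m{C}}^{\top}\tilde{\m{C}}$ carries no interaction terms between distinct nodes. Once the per-sensor additive form~\eqref{eq:param_obs_gramian_set} is in hand, modularity is a one-line telescoping argument, and this theorem then serves as the base fact from which the modularity of $\mr{trace}$ and the monotone submodularity of $\mr{rank}$ and $\mr{log}\,\mr{det}$ will be derived in the following results, exactly as in the linear Gramian case.
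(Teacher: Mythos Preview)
Your proposal is correct and follows essentially the same approach as the paper: both argue that~\eqref{eq:param_obs_gramian_set} expresses $\m{V}_{o}(\mathcal{S})$ as a sum $\sum_{j\in\mathcal{S}}\m{V}_{o}(j)$ of per-sensor contributions independent of the rest of $\mathcal{S}$, whence modularity is immediate with $\m{V}_{o}(\emptyset)=\m{0}$. Your treatment is in fact more careful than the paper's, since you explicitly address what ``modular'' means for a matrix-valued set function and verify the absence of cross terms in $\tilde{\m{C}}^{\top}\tilde{\m{C}}$, points the paper leaves implicit.
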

\begin{proof}~\label{proof:proofModular}
	For any $\mathcal{S}\subseteq\mathcal{V}$, observe that
		\vspace{-0.2cm}
	\begin{align*}\vspace{-0.2cm} \label{proof:1}
		\m{V}_{o}({\mathcal{S}})
		\hspace{-0.05cm}=\hspace{-0.05cm}
		\sum_{j\in \mathcal{S}} \left\{{\m{\Phi}_{0}^{k}}^\top \hspace{-0.05cm} \tilde{\m c}_j^\top
		 \tilde{\m c}_j \m{\Phi}_{0}^{k}\right\}^{\mr{N}-1}_{k=0} =  \sum_{j\in \mathcal{S}} \m{V}_{o}(j),	\vspace{-0.2cm}
	\end{align*}
	where matrix $\m{V}_{o}(\mathcal{S})$ represents a linear function with respect to $\tilde{\m c}_j$; it thus satisfies modularity:
	$\m{V}_{o}(\mathcal{S})= \m{V}_{o}(\emptyset) + \sum_{j\in\mathcal{S}} \m{V}_{o}(j)$. 
\end{proof}
\vspace{-0.1cm}

We note that the proposed method can be applied to nonlinear measurement models; however, for brevity and succinctness in the exposition of proofs, we have utilized a linear measurement model. Based on Theorem~\ref{theo:VarGramModular_prop}, the following result shows that the $\mr{trace}$ of the variational Gramian~\eqref{eq:param_obs_gramian_set}  is modular set function. 
\vspace{-0.1cm}
\begin{mycor}\label{prs:trace_prop}
	Set function			
	$f:\hspace{-0.05cm}2^{\mathcal{V}}\hspace{-0.05cm}\rightarrow\hspace{-0.05cm}\mbb{R}$ characterized  by 
	\vspace{-0.2cm}
	\begin{equation}
		f(\mathcal{S})= \mr{trace}\left(\m{V}_{o}({\mathcal{S}})\right), 	\vspace{-0.2cm}\label{eq:trace_mod} 
	\end{equation}
	for $\mathcal{S}\subseteq\mathcal{V}$ is a modular set function.
\end{mycor}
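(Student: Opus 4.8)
The plan is to obtain the result as an immediate consequence of Theorem~\ref{theo:VarGramModular_prop} combined with the linearity of the trace operator. First I would recall the matrix-valued decomposition established there, namely $\m{V}_{o}(\mathcal{S}) = \m{V}_{o}(\emptyset) + \sum_{j\in\mathcal{S}} \m{V}_{o}(j)$ for every $\mathcal{S}\subseteq\mathcal{V}$. Here $\m{V}_{o}(\emptyset)$ is the zero matrix, since the empty configuration forces $\tilde{\m c}_j = 0$ for all $j$ and hence every summand in~\eqref{eq:param_obs_gramian_set} vanishes; equivalently, the sum over $j\in\mathcal{S}$ in~\eqref{eq:param_obs_gramian_set} is empty when $\mathcal{S}=\emptyset$.

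Next I would apply $\mr{trace}(\cdot)$ to both sides of this decomposition. Because the trace is a linear functional on $\Rn{n_x\times n_x}$ and the sum over $j\in\mathcal{S}$ is finite, it commutes with the summation, yielding $f(\mathcal{S}) = \mr{trace}\bigl(\m{V}_{o}(\mathcal{S})\bigr) = \mr{trace}\bigl(\m{V}_{o}(\emptyset)\bigr) + \sum_{j\in\mathcal{S}} \mr{trace}\bigl(\m{V}_{o}(j)\bigr)$. Each term $\mr{trace}(\m{V}_{o}(j))$ is a well-defined real scalar, as $\m{V}_{o}(j) = \sum_{i=0}^{\mr{N}-1} (\m{\varphi}_{0}^{i})^\top \tilde{\m c}_j^\top \tilde{\m c}_j \m{\varphi}_{0}^{i}$ is a finite sum of rank-one positive semidefinite matrices under Assumption~\ref{assumption:compact}.

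Finally I would match the displayed identity against Definition~\ref{def:modular_submodular}: define the weight function $w:\mathcal{V}\to\mbb{R}$ by $w(j) := \mr{trace}(\m{V}_{o}(j))$ together with $w(\emptyset) := \mr{trace}(\m{V}_{o}(\emptyset)) = 0$. Then $f(\mathcal{S}) = w(\emptyset) + \sum_{j\in\mathcal{S}} w(j)$ for all $\mathcal{S}\subseteq\mathcal{V}$, which is exactly the defining relation of a modular (indeed normalized modular) set function; in particular $f$ is simultaneously submodular and supermodular, so the greedy and multilinear-extension frameworks of Section~\ref{sec:submodularity} both apply.

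There is essentially no obstacle: the claim is a routine corollary of the additive structure proved in Theorem~\ref{theo:VarGramModular_prop}. The only point deserving a line of care is verifying that the per-node trace values are genuine real numbers so that $w$ is $\mbb{R}$-valued, and that the finite summation legitimately passes through the trace — both of which are immediate given the explicit finite-horizon form of $\m{V}_{o}(j)$ in~\eqref{eq:param_obs_gramian_set}.
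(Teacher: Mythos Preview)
Your proposal is correct and follows essentially the same approach as the paper: both invoke the additive decomposition of $\m{V}_{o}(\mathcal{S})$ from Theorem~\ref{theo:VarGramModular_prop} and then pass the trace through the finite sum by linearity to obtain $f(\mathcal{S})=\sum_{j\in\mathcal{S}}\mr{trace}(\m{V}_{o}(j))$. Your write-up is slightly more explicit in identifying the weight function $w$ and matching it against Definition~\ref{def:modular_submodular}, but the argument is the same.
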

\begin{proof}\label{proof:traceModular}
	For any $\mathcal{S}\subseteq\mathcal{V}$, we can write the following
	\vspace{-0.2cm}
	\begin{align*}
		\mr{trace}\left(\m{V}_{o}({\mathcal{S}})\right) 
		&=\mr{trace}\left(\sum_{j\in \mathcal{S}}\left(\sum_{i=0}^{\mr{N}-1}\left(\m{\varphi}_{0}^{i}\right)^\top  \tilde{\m{c}}_j^\top \tilde{\m{c}}_j \m{\varphi}_{0}^{i}\right) \hspace{-0.05cm}\right), \\
		& = \sum_{j\in \mathcal{S}}  \left(\mr{trace}\left(\sum_{i=0}^{\mr{N}-1}\left(\m{\varphi}_{0}^{i}\right)^\top  \tilde{\m{c}}_j^\top \tilde{\m{c}}_j \m{\varphi}_{0}^{i}\right) \hspace{-0.05cm}\right) ,\\
		&=\sum_{j\in \mathcal{S}} 
		 \left(\mr{trace}\left(\m{V}_{o}(j)\right)\right),
	\end{align*}	
	where the last equality is a due to the modularity of the parameterized variational Gramian. This shows that the $\mr{trace}(\m{V}_o(\mathcal{S}))$ can be written as a linear matrix function and thus a modular set function. 
\end{proof}
The following result shows that the $\mr{rank}$ of the variational Gramian~\eqref{eq:param_obs_gramian_set} is submodular and monotone increasing. %; see the extended version of this manuscript~\cite{Kazma2024a} for the proof.
\vspace{-0.1cm}
\begin{mycor}\label{prs:rank_prop}
	Set function
	$f:\hspace{-0.05cm}2^{\mathcal{V}}\hspace{-0.05cm}\rightarrow\hspace{-0.05cm}\mbb{R}$ defined as
	\vspace{-0.2cm} 	
	\begin{equation}\vspace{-0.2cm} \label{eq:rank_submod} 
	f(\mathcal{S})= \mr{rank}\left(\m{V}_{o}({\mathcal{S}})\right), 
	\end{equation}
	for $\mathcal{S}\subseteq\mathcal{V}$ is a submodular monotone increasing set function.
\end{mycor}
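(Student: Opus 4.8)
The plan is to leverage the modular decomposition $\m{V}_o(\mathcal{S})=\sum_{j\in\mathcal{S}}\m{V}_o(j)$ from Theorem~\ref{theo:VarGramModular_prop} together with the observation that each per-sensor summand in~\eqref{eq:param_obs_gramian_set} is positive semidefinite, since $\m{V}_o(j)=\sum_{i=0}^{\mr{N}-1}(\m{\varphi}_0^i)^\top\tilde{\m c}_j^\top\tilde{\m c}_j\m{\varphi}_0^i\succeq \m{0}$. The first step is to convert the rank of a sum of PSD matrices into a subspace-dimension statement. For PSD matrices $\m{A},\m{B}$, if $(\m{A}+\m{B})\m{x}=\m{0}$ then $\m{x}^\top\m{A}\m{x}+\m{x}^\top\m{B}\m{x}=0$, and since both terms are nonnegative each vanishes, forcing $\m{A}\m{x}=\m{B}\m{x}=\m{0}$; hence $\nullspace(\m{A}+\m{B})=\nullspace(\m{A})\cap\nullspace(\m{B})$, and passing to orthogonal complements gives $\range(\m{A}+\m{B})=\range(\m{A})+\range(\m{B})$. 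Iterating over $j\in\mathcal{S}$ and writing $U_j:=\range(\m{V}_o(j))$, this yields the key identity
\begin{equation*}
 f(\mathcal{S})=\rank\big(\m{V}_o(\mathcal{S})\big)=\dim\Big(\textstyle\sum_{j\in\mathcal{S}}U_j\Big),
\end{equation*}
so $f$ is exactly the rank function of the vector configuration $\{U_j\}_{j\in\mathcal{V}}$ of subspaces of $\Rn{n_x}$.

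From here monotonicity is immediate: for $\mathcal{A}\subseteq\mathcal{B}$ one has $\sum_{j\in\mathcal{A}}U_j\subseteq\sum_{j\in\mathcal{B}}U_j$, hence $f(\mathcal{A})\le f(\mathcal{B})$ in the sense of Definition~\ref{def:monotone_increasing}; and $f(\emptyset)=\rank(\m{0})=0$ shows normalization. For submodularity, I would fix $\mathcal{A}\subseteq\mathcal{B}\subseteq\mathcal{V}$ and $s\notin\mathcal{B}$, set $W_\mathcal{A}:=\sum_{j\in\mathcal{A}}U_j\subseteq W_\mathcal{B}:=\sum_{j\in\mathcal{B}}U_j$, and apply the dimension formula $\dim(W+U_s)=\dim W+\dim U_s-\dim(W\cap U_s)$ to compute the marginal gains
\begin{equation*}
 f(\mathcal{A}\cup\{s\})-f(\mathcal{A})=\dim U_s-\dim(W_\mathcal{A}\cap U_s),\qquad f(\mathcal{B}\cup\{s\})-f(\mathcal{B})=\dim U_s-\dim(W_\mathcal{B}\cap U_s).
\end{equation*}
Since $W_\mathcal{A}\subseteq W_\mathcal{B}$ forces $W_\mathcal{A}\cap U_s\subseteq W_\mathcal{B}\cap U_s$ and therefore $\dim(W_\mathcal{A}\cap U_s)\le\dim(W_\mathcal{B}\cap U_s)$, the marginal gain at $\mathcal{A}$ is at least that at $\mathcal{B}$, which is precisely the diminishing-returns inequality~\eqref{eq:submodular_def}. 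Combined with monotonicity and $f(\emptyset)=0$, this establishes that $f$ is a polymatroid function, hence in particular submodular and monotone increasing.

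The step I expect to be the main obstacle — or at least the one requiring the most care — is the PSD range/nullspace identity $\range(\m{A}+\m{B})=\range(\m{A})+\range(\m{B})$, since this is exactly where positive semidefiniteness is indispensable: for general symmetric matrices cancellation can make the rank of a sum smaller than expected, which would break both the subspace reduction and the submodularity argument. Everything downstream is the standard fact that the dimension of a span over a ground set of subspaces is a matroid rank function, and invoking Theorem~\ref{theo:VarGramModular_prop} guarantees that each $\m{V}_o(j)$ indeed enters additively and is PSD, so the reduction is legitimate.
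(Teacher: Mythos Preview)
Your argument is correct and follows essentially the same route as the paper: both reduce the marginal gain $f(\mathcal{S}\cup\{s\})-f(\mathcal{S})$ to $\rank\big(\m{V}_o(\{s\})\big)-\dim\big(\range(\m{V}_o(\mathcal{S}))\cap\range(\m{V}_o(\{s\}))\big)$ via the modularity of $\m{V}_o$ and then observe that the intersection dimension grows with $\mathcal{S}$. Your version is actually more careful in that you explicitly justify the identity $\range(\m{A}+\m{B})=\range(\m{A})+\range(\m{B})$ from positive semidefiniteness, whereas the paper simply writes down the resulting rank formula without isolating that step.
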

 \setlength{\textfloatsep}{0pt}
%\begin{proof}
%	For proof of Proposition~\ref{prs:rank_prop}, refer to Appendix~\ref{apdx:proof-rank}.
%\end{proof}
\vspace{-0.2cm}
\begin{proof}\label{proof:ranksubModular}
	For any $\mathcal{S}\subseteq\mathcal{V}$, first we demonstrate that $f(\mathcal{S})$ in \eqref{eq:rank_submod} is submodular. We define the derived set function $f_s: 2^{\mathcal{V}\setminus\{s\}}\rightarrow \mbb{R}$ for a $a \in \mathcal{V}$ as
	\begin{align*}
		f_s(\mathcal{S}) &\,= f\left(\mathcal{S} \cup \{s\}\right)  - f\left(\{s\}\right), \\[-0.2em] 
		&\,= 
		\mr{rank}\left(\hspace{-0.05cm}\m{V}_{o}(\mathcal{S} \cup \{s\})\hspace{-0.05cm}\right) \hspace{-0.05cm}- \hspace{-0.05cm}\mr{rank}\left(\hspace{-0.05cm}\m{V}_{o}(\{s\})\hspace{-0.05cm}\right), \\[-0.2em] 
		&\,=
		\mr{rank}\left(\hspace{-0.05cm}\m{V}_{o}(\mathcal{S}) + \m{V}_{o}(\{s\})\hspace{-0.05cm}\right)\hspace{-0.05cm}-\hspace{-0.05cm}\mr{rank}\left(\hspace{-0.05cm}\m{V}_{o}(\{s\})\hspace{-0.05cm}\right), \\[-0.2em] 
		&\,=
		\mr{rank}\left(\hspace{-0.05cm}\m{V}_{o}(\{s\})\hspace{-0.05cm}\right) \\[-0.2em] 
		&\;\;\;\,\;\;-\mr{dim}\left(\mr{image}\left(\m{V}_{o}(\mathcal{S})\right) \cap \mr{image}\left(\m{V}_{o}(\{s\})\right)\hspace{-0.05cm}\right),
	\end{align*}
	This indicates that  $f_a(\cdot)$ is monotone decreasing since $\mr{rank}\left(\hspace{-0.05cm}\m{V}_{o}\left(\{a\}\right)\hspace{-0.05cm}\right)$ is constant while the dimension of $\mr{image}\left(\m{V}_{o}(\mathcal{S})\right)$ is increasing with $\mathcal{S}$. This implies that $f(\cdot)$ in \eqref{eq:rank_submod} is submodular \cite{Summers2016,Lovasz1983}. Second, it is straightforward to show that $f(\cdot)$ is also monotone increasing since for $\mathcal{A}\subseteq\mathcal{B}$ provided that $\mathcal{A},\mathcal{B}\subseteq\mathcal{V}$ implies $f(\mathcal{B})\geq f(\mathcal{A})$. 
\end{proof}
\vspace{-0.2cm}

The following result provides evidence that the $\mr{log}\,\mr{det}$ of the variational Gramian~\eqref{eq:param_obs_gramian_set} is submodular and monotone increasing. %; see the extended version of this manuscript~\cite{Kazma2024a} for the proof.
\vspace{-0.2cm}
\begin{mycor}\label{prs:Trace-Logdet_prop}
	Set function $f:2^{\mathcal{V}}\rightarrow\hspace{-0.05cm}\mbb{R}$ defined as
	\vspace{-0.2cm} 	
	\begin{equation}\vspace{-0.2cm} \label{eq:logdet_submodular} 
		f(\mathcal{S}):=\mr{log}\,\mr{det}\left({\m V}(\mathcal{S})\right),
	\end{equation} 
	for $\mathcal{S}\subseteq\mathcal{V}$ is a submodular monotone increasing set function.
\end{mycor}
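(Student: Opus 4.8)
The plan is to mirror the argument used for the linear observability Gramian in~\cite{Summers2016}, leaning entirely on the modularity of $\m{V}_{o}(\mathcal{S})$ from Theorem~\ref{theo:VarGramModular_prop}. First I would record the two structural facts that drive the whole proof: by Theorem~\ref{theo:VarGramModular_prop} we have $\m{V}_{o}(\mathcal{S})=\m{V}_{o}(\emptyset)+\sum_{j\in\mathcal{S}}\m{V}_{o}(j)$, and each increment $\m{V}_{o}(j)=\sum_{i=0}^{\mr{N}-1}(\m{\varphi}_{0}^{i})^\top\tilde{\m c}_j^\top\tilde{\m c}_j\m{\varphi}_{0}^{i}$ is positive semidefinite. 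Hence $\mathcal{A}\subseteq\mathcal{B}$ forces $\m{V}_{o}(\mathcal{B})=\m{V}_{o}(\mathcal{A})+\sum_{j\in\mathcal{B}\setminus\mathcal{A}}\m{V}_{o}(j)\succeq\m{V}_{o}(\mathcal{A})$ in the Loewner order. As in the linear case I would assume the relevant Gramians are positive definite (e.g.\ via the standard regularization $\m{V}_{o}(\emptyset)=\eps\m{I}_{n_x}$ with small $\eps>0$, consistent with Assumption~\ref{assumption:compact} and observability of~\eqref{eq:model_DT}), so that $\mr{log}\,\mr{det}$ is finite throughout. Monotonicity is then immediate: $\mr{log}\,\mr{det}(\cdot)$ is monotone increasing on the positive-definite cone, so $\m{V}_{o}(\mathcal{A})\preceq\m{V}_{o}(\mathcal{B})$ yields $f(\mathcal{A})\le f(\mathcal{B})$.

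For submodularity I would verify the diminishing-returns inequality~\eqref{eq:submodular_def} directly. Fix $\mathcal{A}\subseteq\mathcal{B}\subseteq\mathcal{V}$ and $s\notin\mathcal{B}$, and put $\m{X}=\m{V}_{o}(\mathcal{A})$, $\m{Y}=\m{V}_{o}(\mathcal{B})$, $\m{Z}=\m{V}_{o}(\{s\})\succeq\m{0}$, so that by modularity $\m{V}_{o}(\mathcal{A}\cup\{s\})=\m{X}+\m{Z}$, $\m{V}_{o}(\mathcal{B}\cup\{s\})=\m{Y}+\m{Z}$, with $\m{X}\preceq\m{Y}$. Define $\phi(t):=\mr{log}\,\mr{det}\bigl(\m{X}+t(\m{Y}-\m{X})+\m{Z}\bigr)-\mr{log}\,\mr{det}\bigl(\m{X}+t(\m{Y}-\m{X})\bigr)$ for $t\in[0,1]$; the desired inequality $f(\mathcal{A}\cup\{s\})-f(\mathcal{A})\ge f(\mathcal{B}\cup\{s\})-f(\mathcal{B})$ is precisely $\phi(0)\ge\phi(1)$. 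Using $\tfrac{\mathrm{d}}{\mathrm{d}t}\mr{log}\,\mr{det}(\m{M}(t))=\mr{trace}\bigl(\m{M}(t)^{-1}\dot{\m{M}}(t)\bigr)$ gives $\phi'(t)=\mr{trace}\bigl[(\m{M}_1(t)^{-1}-\m{M}_2(t)^{-1})(\m{Y}-\m{X})\bigr]$ with $\m{M}_1(t)=\m{X}+t(\m{Y}-\m{X})+\m{Z}$ and $\m{M}_2(t)=\m{X}+t(\m{Y}-\m{X})$. Since $\m{Z}\succeq\m{0}$ gives $\m{M}_1(t)\succeq\m{M}_2(t)$, operator monotonicity of matrix inversion yields $\m{M}_1(t)^{-1}-\m{M}_2(t)^{-1}\preceq\m{0}$; pairing this negative semidefinite matrix with the positive semidefinite $\m{Y}-\m{X}$ (write $\m{Y}-\m{X}=\m{L}\m{L}^\top$ and cycle the trace) gives $\phi'(t)\le0$, hence $\phi(1)\le\phi(0)$. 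An equivalent, calculus-free route writes the marginal gain via the matrix determinant lemma as $\mr{log}\,\mr{det}\bigl(\m{I}_{n_x}+\m{V}_{o}(\mathcal{A})^{-1}\m{V}_{o}(\{s\})\bigr)$ and invokes operator monotonicity, but the integral argument handles the non-commuting terms most transparently, so I would present that.

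The main obstacle is not the algebra but well-definedness: $\mr{log}\,\mr{det}$ is $-\infty$ on rank-deficient Gramians, which occurs for small $\mathcal{S}$ unless the system is observable from those nodes. I would resolve this exactly as in the linear treatment, either by restricting attention to $\mathcal{S}$ with $\m{V}_{o}(\mathcal{S})\succ\m{0}$ or by the $\eps\m{I}_{n_x}$ regularization above, under which every step is valid for all $\mathcal{S}\subseteq\mathcal{V}$; I will flag this assumption explicitly. Finally, I would note that, as in Theorem~\ref{theo:VarGramModular_prop}, the proof uses only that $\m{V}_{o}$ is modular with positive semidefinite increments, so the same argument covers nonlinear measurement models verbatim.
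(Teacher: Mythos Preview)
Your proposal is correct and follows essentially the same route as the paper: both introduce the interpolation $\m{V}_{o}(\mathcal{A})+t(\m{V}_{o}(\mathcal{B})-\m{V}_{o}(\mathcal{A}))$, differentiate the marginal-gain function $\phi(t)$, and obtain $\phi'(t)\le 0$ from $\mr{trace}\bigl[((\cdot+\m{Z})^{-1}-(\cdot)^{-1})(\m{Y}-\m{X})\bigr]\le 0$ via operator monotonicity of the inverse and positive semidefiniteness of $\m{Y}-\m{X}$, exactly as in~\cite{Summers2016}. Your explicit treatment of the rank-deficient/regularization issue is in fact more careful than the paper's, which simply asserts in a remark that zero eigenvalues are permitted.
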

\vspace{-0.2cm}
%\begin{proof}
%	For proof of Proposition~\ref{prs:Trace-Logdet_prop}, refer to Appendix~\ref{apdx:proof-logdet}.
%\end{proof}
%\vspace{-0.1cm}
%The validity of the performance guarantees for both the greedy and continuous greedy algorithms are contingent on the modularity and monotone submodularity properties shown in the aforementioned propositions. 
\begin{proof}\label{proof:submodular-log-det}
	Let $f_s: 2^{V-\{s\}} \rightarrow \mathbb{R}$ denote a derived set function defined as
	\begin{align*}
		f_s(\mathcal{S}) & =\mr{log}\,\mr{det} \m{V}_o\left({\mathcal{S} \cup\{s\}}\right)-\mr{log}\,\mr{det} \m{V}_{o}(\mathcal{S}),\\
		& =\mr{log}\,\mr{det}\left(\m{V}_{o}(\mathcal{S})+\m{V}_{o}(\{s\})\right)-\mr{log}\,\mr{det} \m{V}_{o}(\mathcal{S}).
	\end{align*}	
	We first show $f_s(\mathcal{S})$ is monotone decreasing for any $s \in V$. That being said, let $\mathcal{A} \subseteq \mathcal{B}\subseteq \mathcal{V}-\{s\}$, and let $\m{V}_o(\tilde{\m{c}}) = \m{V}_o(\mathcal{A}) +\tilde{\m{c}}\left( \m{V}_o(\mathcal{B})- \m{V}_o(\mathcal{A})\right)$ for $\tilde{\m{c}}\in[0,1]$. Then for 
	\begin{align*}
		\tilde{f}_s(\m{V}_o(\tilde{\m{c}}))=\mr{log}\,\mr{det}\left(\m{V}_o(\tilde{\m{c}})+\m{V}_{o}(\mathcal{S})\right)-\mr{log}\,\mr{det}\left(\m{V}_o(\tilde{\m{c}})\right),
	\end{align*}
	we obtain the following
	\vspace{-0.2cm}
	\begin{align*}\vspace{-0.2cm}\label{eq:proof:cor3}
		\frac{\mr{d}}{\mr{d} \tilde{\m{c}}} \tilde{f}_s(\m{V}_o(\tilde{\m{c}}))
	\vspace{-0.2cm}	&=\vspace{-0.2cm}
		\mr{trace}\Big[
		\left(\left(\m{V}_o(\tilde{\m{c}})+\m{V}_{o}(\mathcal{S})\right)^{-1}-\m{V}_o(\tilde{\m{c}})^{-1}\right)\\
		\vspace{-0.2cm}&\quad\quad\quad\quad\;
		\left(\m{V}_o(\mathcal{B})-\m{V}_o(\mathcal{A})\right) \Big] \leq 0.
	\end{align*}
	Such that $	\left(\left(\m{V}_o(\tilde{\m{c}})+\m{V}_{o}(\mathcal{S})\right)^{-1}-\m{V}_o(\tilde{\m{c}})^{-1}\right)^{-1} \preceq 0$, and $\left(\m{V}_o(\mathcal{B})-\m{V}_o(\mathcal{A})\right)\succeq 0$, then the above inequality holds. We have therefore shown that $f(\mathcal{S})$ is submodular and ${f}_s$ is monotone decreasing. Then, by the additive property of $\m{V}_{o}(\mathcal{S})$ (see~\cite{Summers2016}) we have $f(\mathcal{S})$ being monotone increasing. The proof is analogous to~\cite[Theorem 6]{Summers2016} and~\cite[Lemma 3]{Zhou2019}
\end{proof}
\vspace{-0.2cm}
\begin{myrem}\label{rmk:positivedefinite}
	Notice that, for the $\mr{log}\,\mr{det}$ to be submodular and monotone increasing, the variational observability Gramian can have zero eigenvalues.
\end{myrem}
\vspace{-0.1cm}
In study~\cite{Zhou2019}, the considered observability measures are based on the Lie derivative matrix $\m{O}_{l}$. That being said, the above submodular properties hold true if and only if $\m{O}_{l}$ is full rank. In this case when considering the variational Gramian, there is no such restriction. The submodularity of the $\mr{log}\,\mr{det}$ still holds in rank deficient situations. Such situations can arise when not enough sensing nodes are chosen and thereby the system is not yet fully observable. The ensuing section demonstrates the validity of the SNS problem under the action of the studied variational observability measures.
%%\vspace{-0.3cm}
%\subsection{Discussion on computational complexity}\label{sec:comp_complex}
%%\vspace{-0.1cm}
%\begin{table}[h]
%	\centering 
%	\caption{Computational Complexity of the submodular maximization algorithms}
%	\label{tab:Jac-DAE-ODE_disc}
%	\vspace{-0.1cm}
%	\renewcommand{\arraystretch}{2}
%	\begin{tabular}{l|l}
%		%		\midrule \hline
%		\hline \hline 
%		$\hspace{-0.15cm}\mr{Greedy}$ $\mr{algorithm}\hspace{-0.15cm}$
%		&\multicolumn{1}{c}{$O\left(n^3 m k\right)$~\cite{Shamaiah2010}} \\ \hline
%		$\hspace{-0.15cm}\mr{Continous}$ $\mr{greedy}$ $+\, \mr{pipage}$ $\mr{rounding}$ \hspace{-0.15cm}
%		&\multicolumn{1}{c}{$O\left(2n^2\right)$~\cite{Iyer2014} } \\
%		\toprule \bottomrule
%	\end{tabular}
%	\vspace{-0.2cm}
%\end{table}

%%%%%Removed to make 6 pages without save trees
%  \begin{figure}[t]
%	\centering
%	\includegraphics[keepaspectratio=true,scale=0.53]{Figures/State-trajectoryGRI30}
%	\vspace{-0.3cm}
%	\caption{State Trajectories ($53$ states) of the combustion reaction network after disturbance.}\label{fig:states}
%\end{figure}
%%%%%%%%%%%%%%%%%%%%%%%%%%%%%%%%%%%%%%%%%%%%%%%%%%%%%
\vspace{-0.3cm}
\section{Numerical Results}\vspace{-0.1cm}\label{sec:casestudies} %$\mr{GRI30}$
In this paper, we consider a nonlinear system that represents a  natural gas combustion reaction network written as 
\vspace{-0.2cm}
\begin{equation}\vspace{-0.2cm}\label{eq:combustion_network}
	\dot{\m {x}}(t)=\Theta \boldsymbol{\psi}\left(\m {x}(t)\right),
\end{equation}
such that, the polynomial functions of concentrations $\psi_{j}$ $j=\{1,\;2,\;\ldots\;, \; N_{r}\}$ are concatenated in vector $\boldsymbol{\psi}\left(\m {x}\right)=[\psi_{1}\left(\m {x} \right),\psi_{2}\left(\m {x} \right),\ldots, \psi_{n_{r}}\left(\m {x} \right)]^{T}$. The concentrations of $n_x$ chemical species are denoted by vector $\m {x}=[x_{1},\; x_{2},\;\cdots\;,\; x_{n_{x}}]$. The stoichiometric coefficients $q_{ji}$ and $w_{ji}$ are defined by constant matrix $\Theta=[w_{ji}-q_{ji} ]\in \mathbb{R}^{n_{x}\times N_{r}}$. We denote the number of chemical reactions by $N_{r}$.
%Removed for brevity for L-CSS%%%%%%%%%%%%%%%%%%%%%%
% and the list of chemical reactions can be expressed as $$\sum_{i=1}^{n_{x}}q_{ji} \mathcal{R}_{i} \rightleftarrows  \sum_{i=1}^{n_{x}} w_{ji}\mathcal{R}_{i}, \;\;\;  j \in \{1,\;2\;,\;\cdots\;, \; N_{r}\},$$ where $\mathcal{R}_{i}$, $i\in \{1,\;2,\;\cdots\;,\; n_{x}\}$ represents the chemical species.
 %%%%%%%%%%%%%%%%%%%%%%%%%%%%%%%%%%%%%%
The considered network is a natural gas combustion reaction network $\mr{GRI30}$ that has $n_x=53$ chemical species and $N_r = 325$ reactions. We refer the readers to~\cite[Section $\mr{V}$]{Haber2018} for specifics regarding system parameters. Based on analyzing the system's initial state response, the chosen observation window is $\mr{N} = 1000$ and the discretization constant is set at $T=1\cdot 10^{-12}$. 
%The data required to calculate the reaction rates are taken from the reaction mechanisms database provided with Cantera software files.  
The actual initial state $\m{x}_0=[0, 0, 0, 2, \cdots, 1, \cdots, 7.52, \cdots, 0]$. 
%%Removed to make 6 pages.
%Figure~\ref{fig:states} depicts the state trajectory of the system discrete-time dynamics after applying a system disturbance. 
%Meaning that, the system is simulated based on $\m{x}_{0} = \m{x}_{0}+ \m{x}_{0}*\alpha_{d}$ where $\alpha_{d}\in\mathbb{R}$ is a random number between $(0,0.2)$. 
%%%%%%%%%%%%%%
The system is simulated after applying a system disturbance based on $\m{x}_{0} = \left(1+ \alpha_{d}\right)\m{x}_{0}$ where $\alpha_{d}\in\mathbb{R}$ is the disturbance magnitude between $(0,0.2)$. 

We assess the applicability and validity of the extended SNS problem~\eqref{eq:sns_submod} by comparing the state estimation error based on measurements from the optimally selected nodes to that obtained from the greedy algorithm. The optimality of the selected sensor nodes is directly related to the state estimation error. This is due to the underlying relations between observability and the ability to infer system states from limited measurement data. Let $\mathcal{S}^*_{M}$ define the optimal sensor node location resulting from solving~\eqref{eq:sns_submod_extended} using %Algorithm~\ref{alg:continuous_greedy} 
 the continuous greedy algorithm and let $\mathcal{S}^*$  define the optimal sensor node location resulting from solving~\eqref{eq:sns_submod} using the greedy algorithm. That being said, let $\m{x}_{\text{actual}}$ denote the state estimate resulting from solving the nonlinear state estimation problem expressed as
%\vspace{-0.1cm}
\begin{equation}\label{eq:opt_SE}
\hspace{-0.3cm}	\underset{\tilde{\m x}_0\in \mathbfcal{X}_0}{\mr{minimize}}\;\; {\m g(\tilde{\m x}_0)}^\top {\m g(\tilde{\m x}_0)},\quad 
	\subjectto \;\; {\tilde{\m x}}^l_0 \leq \tilde{\m x}_0 \leq {\tilde{\m x}}^u_0,\hspace{-0.3cm}
\end{equation}
%\begin{align}
%		&\underset{\tilde{\m x}_0\in \mathbfcal{X}_0}{\mr{minimize}}\quad {\m g(\tilde{\m x}_0)}^\top {\m g(\tilde{\m x}_0)},\\
%		&\subjectto \quad {\tilde{\m x}}^l_0 \leq \tilde{\m x}_0 \leq {\tilde{\m x}}^u_0,
%\end{align}
where ${\tilde{\m x}}^l_0$ and ${\tilde{\m x}}^u_0$ are the lower and upper bounds of $\tilde{\m x}_0$. The vector function $\m{g}(\cdot):\mbb{\m R}^{n_x}\rightarrow\mbb{\m R}^{\mr{N} n_y}$ that is defined as $\m{g}(\tilde{\m{x}}_0):= \tilde{\m{y}} - \tilde{\m C} \tilde{\m{x}}$ represents the open-lifted observer. The measurement vector $\tilde{\m y} := \{\tilde{\m y}_{i}\}_{i=1}^{\mr{N}-1}\in \mbb{R}^{\mr{N}n_y}$ and estimated state-vector $\tilde{\m{x}}:=\{\tilde{\m x}_{i}\}_{i=1}^{\mr{N}-1}\in \mbb{R}^{\mr{N}n_x}$. The above least squares optimization problem is based on an open observer framework introduced in~\cite{Haber2018}. It is solved using the trust-region-reflective algorithm on MATLAB. The state estimation error can be written as $\left\|\m {x}_{\text{actual}}-\tilde{\m {x}} \right\|_{2}/\left\|\m {x}_{\text{actual}}  \right\|_{2}$. This validates the effectiveness of the solution obtained from~\eqref{eq:sns_submod_extended}, thereby achieving the performance bound as indicated in Theorem~\ref{theo:guarenteeMulti}. 

\begin{figure}[t]
	\centering
	\includegraphics[keepaspectratio=true,scale=0.53]{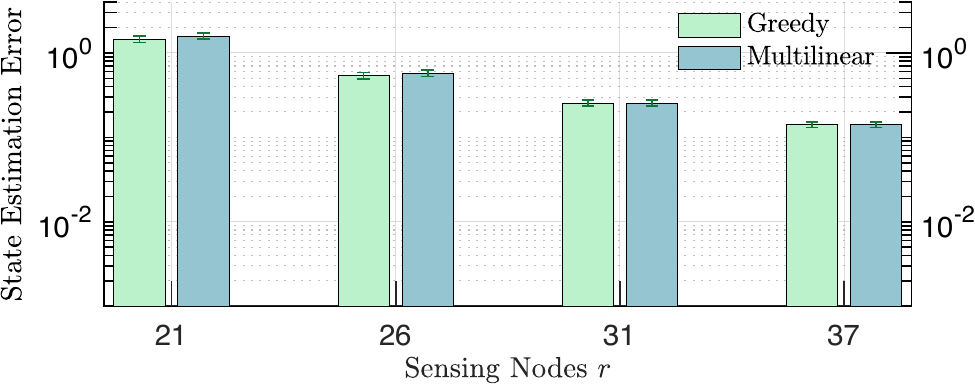 }
	\vspace{-0.38cm}
	\caption{State estimation error based on the optimal selected node obtained from greedy algorithm (left) and continuous greedy algorithm (right).}\label{fig:EstErr}
\end{figure}

The state estimation errors resulting from solving the SNS problems based on the aforementioned algorithms is depicted in Figure~\ref{fig:EstErr}. The maximization problems are solved for sensor nodes cardinality constraint $|\mathcal{S}|=r$ with $r= [21,26,31,37]$. The SNS problem is solved for both methods by considering $20$ generated simulations based on $\m{x}_0$ chosen randomly by applying perturbation $\alpha_{d}$. The results show that the optimal solution $\mathcal{S}^*_{M}$ yields similar state estimation values when compared with the estimation values of optimal solution $\mathcal{S}^*$. Consequently, the application of a multilinear extension for observability-based SNS can be further investigated under different matroid constraints; see Section~\ref{sec:introduction}. On such note, we conclude this section.
%The computational time for both algorithms is compared. On average the computational effort required for solving the SNS problem posed under the relaxed formulation~\eqref{eq:sns_submod_extended} is $ 10\%$ greater than that solved using a greedy algorithm~\eqref{eq:sns_submod}. This shows the scalability of such method for SNS in nonlinear networks. However, due to space limitation a computational complexity study and discussion, although warranted, is left for future our work. On such note, we conclude this section.

%%%Removed to make 6 pages without savetrees%%%%%%%%%%%%%%%%%%%%%%%%%%
%\vspace{-0.2cm}
%\section{Conclusion}\vspace{-0.1cm}\label{sec:summary}
%In this letter, we showed that the SNS problem for nonlinear systems, modeled by considering the variational dynamics, can be solved as a submodular set optimization problem. 
%In particular, we showed that metrics based on the parameterized variational Gramian (i.e., $\mr{trace}$, $\mr{rank}$, and $\mr{log}\,\mr{det}$), that extends the linear Gramian to nonlinear systems, are modular and submodular. Furthermore, we introduced a continuous extension to the submodular SNS problem. This multilinear extension presents a performance guarantee when $f(\mathcal{S})$ is monotone submodular. The resulting optimization problem is then solved using a continuous greedy algorithm and is compared to the well-known greedy algorithm that is typically used to solve submodular SNS problems. 
%%%%%%%%%%%%%%%%%%%%%%%%%%%%%%%%%%%%%%%%%%%%%%%%%%%

\balance
\vspace{-0.3cm}
\bibliographystyle{IEEEtran}
%\vspace{-0.3cm}
\bibliography{library}
%\bibliography{Bibliography/bib_file}
\vspace{-0.5cm}

\end{document}